\definecolor{keywordcolor}{rgb}{0.7, 0.1, 0.1}   
\definecolor{tacticcolor}{rgb}{0.0, 0.1, 0.6}    
\definecolor{commentcolor}{rgb}{0.4, 0.4, 0.4}   
\definecolor{symbolcolor}{rgb}{0.0, 0.1, 0.6}    
\definecolor{sortcolor}{rgb}{0.1, 0.5, 0.1}      
\definecolor{attributecolor}{rgb}{0.7, 0.1, 0.1} 
\def\textsubscript#1%
\def\cdd{\mbox{\boldmath$\cdot$}~}
\newcommand{\ZZ}{\mathbb{Z}}
\def\@oddfoot{\hfill}
\def\setshumei#1#2#3{%
  \shumeicount=\count0
  \def\@oddhead{%
    \raise-5pt\hbox to0pt{\vrule width\hsize height 0pt depth 0.4pt\hss}\relax
    \ifnum \shumeicount=\count0
      \raise-7pt\hbox to0pt{\vrule width\hsize height 0pt depth 0.4pt\hss}\relax
      #1
    \else
      \ifodd\count0
        #2
      \else
        #3
       \fi
     \fi
  }%
}
\def\@oddfoot{\hfill}
\def\setshujiao{%
  \shujiaocount=\count0
  \def\@oddfoot{%
      \ifodd\count0
      \else
      \fi
  }%
}
\def\title#1#2#3#4{{
  \vspace*{0.3cm}
  \begin{flushleft} \Large\bf #1\end{flushleft}
  \vspace*{-0.2cm}
      \begin{flushleft}
      \bf #2
      \end{flushleft}
      \footnotetext{\hspace{-6mm} #3\\ #4}}}
\def\dshm#1#2#3#4
\def\drd#1#2
\def\epsilon{\varepsilon}
\begin{document}

\title{A Formal Proof of the Irrationality of $\zeta(3)$ in Lean 4}
{\uppercase{Liu} Junqi \cdd \uppercase{Zhang} Jujian \cdd \uppercase{Zhi} Lihong}
{\uppercase{Liu} Junqi \cdd \uppercase{Zhi} Lihong\\
State Key Laboratory of Mathematical Sciences, Academy of Mathematics and Systems Science, University of Chinese Academy of Science.  Email: liujunqi@amss.ac.cn; lzhi@mmrc.iss.ac.cn\\   
\uppercase{Zhang} Jujian \\
Department of Mathematics, Imperial College London. Email: jujian.zhang19@imperial.ac.uk \\
   } 
{$^*$This research was supported by the National Key R$\&$D Program of China 2023YFA1009401 and the Strategic Priority Research Program of Chinese Academy of Sciences under Grant XDA0480501.}

\drd{DOI: }{Received: August 8 2025}


\dshm{2025}{XX}{A Formal Proof of the Irrationality of $\zeta(3)$ in Lean 4}{\uppercase{Liu Junqi} $\cdd$ \uppercase{Zhang Jujian} $\cdd$
\uppercase{Zhi Lihong}}

\Abstract{
We formalize a proof of the irrationality of $\zeta(3)$ in Lean~4, using Beukers’ method. To support this, we extend the Lean mathematical library (\texttt{Mathlib}) by formalizing shifted Legendre polynomials and important results in analytic number theory that were previously missing. As part of the Lean~4  \texttt{PrimeNumberTheoremAnd} project, we also formalize the asymptotic behavior of the prime counting function, giving the first formal proof in Lean~4 of a version of the Prime Number Theorem with an error term which is stronger than what had previously been formalized. This result is a crucial ingredient in proving the irrationality of \(\zeta(3)\). Our complete Lean~4 formalization is publicly available on GitHub. \footnote{See \url{https://github.com/ahhwuhu/zeta_3_irrational}.}}

\Keywords{formal proof, irrationality, Riemann zeta function, shifted Legendre polynomial, Prime Number Theorem, number theory.}        



\section{Introduction}\label{sec1}

The Riemann zeta function is a crucial concept in mathematics. For real values of $s$ with $s > 1$, the Riemann zeta function is defined as
\[\zeta(s) := \sum\limits_{n=1}^{\infty} \frac{1}{n^s}.\]
In 1978, Ap\'ery proved that $\zeta(3)$ is irrational
\cite{ap1979ery}.
This result was the first dent in the problem of the irrationality of the values of the Riemann zeta function at odd positive integers; see \cite{van1979proof} for an informal report on Ap\'ery's proof by Van der Poorten.
In 1979, this proof was shortened by Beukers \cite{beukers1979note}, who used an integral method to connect $\zeta(3)$ with a specific double improper integral over the unit square $(0,1)^2$.

A computer algebra based formal proof of the irrationality of $\zeta(3)$ using the Coq proof assistant was  given by Salvy \cite{ BrunoSalvy2003},   Chyzak et al. \cite{chyzak2014computer} and Mahboubi and Sibut-Pinote
\footnote{\url{https://github.com/coq-community/apery}}\cite{mahboubi2021formal}. It follows  Ap\'ery's original proof \cite{ap1979ery} and uses the method of creative telescoping \cite{ zeilberger1991method}  for dealing with recurrence relations that appeared in  Ap\'ery's proof.

 Eberl formalized  Beukers' proof using the  Isabelle proof assistant \cite{Zeta_3_Irrational-AFP} based on the lecture notes of Filaseta \cite{Filaseta2011}. The asymptotic upper bound on ${\rm lcm} \{1, \ldots, n\} \leq O(c^n)$ for any $c > e$ (Euler's number) used by both Ap\'ery and  Beukers is available in Isabelle \cite{Prime_Number_Theorem-AFP}. 

Our work contributes to the mathematical library  \texttt{Mathlib} \cite{mathlib} for the Lean 4 theorem  proof assistant \cite{moura2021lean}, a system based on dependent type theory augmented with quotient types and classical reasoning. \texttt{Mathlib} is a decentralized and continuously evolving library, with contributions from over 340 authors. While Lean’s library excels in many areas, it has lagged behind other theorem provers, such as Isabelle, in certain analytical domains. Our project aims to bridge this gap by formalizing important theorems in areas like calculus and analytic number theory, thereby enhancing Lean’s analytical content and further enriching \texttt{Mathlib}'s diverse body of work.

Although Lean trails behind Isabelle in formalizing some foundational theorems, such as the Prime Number Theorem, significant progress is being made. Terence Tao, Alex Kontorovich, and others are actively working on the \texttt{PrimeNumberTheoremAnd} project~\footnote{\url{https://github.com/AlexKontorovich/PrimeNumberTheoremAnd}} in Lean 4. As part of this effort, we have formalized related results — Theorem 25 and Corollary 9~\footnote{\url{https://alexkontorovich.github.io/PrimeNumberTheoremAnd/web/sect0004.html}}—which are crucial for proving the irrationality of $\zeta(3)$. Additionally, during this process, we identified and corrected a typo in their formal theorem statements.

Our formalization of the proof of the irrationality of $\zeta(3)$ in Lean 4 follows mainly Beukers' method \cite{beukers1979note}.
A key idea in Beukers' proof for showing that a real number $x$ is irrational is to construct a non-zero sequence $\{a_n + b_n x\}$, where $a_n, b_n \in \ZZ$, that tends to zero as $n \rightarrow \infty$.   If $x$ were rational, say  $x=\frac{p}{q}, q >0$,   the sequence $\{|a_n + b_n x|\}$ would have a lower bound of $\frac{1}{q}$, independent from $n$,  leading to a contradiction. 
Our main steps are outlined by referring to Beukers' proof. 
In all the following multiple integrals, we denote by $\mu$ a measure on the region of integration, and use $d\mu$ to indicate differential element.

\begin{itemize}
    \item  
     Consider  the integral 
\begin{equation}\label{integral6}
\int_{(x,y) \in (0,1)^2}  -P_n(x)P_n(y)\frac{\log (x y)}{1-x y} d\mu 
\end{equation}

where $P_n(x)$ is the shifted Legendre polynomial
\begin{equation}
 P_n(x) := \frac{1}{n!}\frac{d^n}{dx^n}[x^n(1-x)^n]. 
 \end{equation}
According to Lemma 1 in \cite{beukers1979note}, the  integral (\ref{integral6}) equals to 
 $\frac{a_n + b_n \zeta(3)}{d_n^3}$, where
 \begin{equation}
 d_n := {\rm lcm} \{1, 2, \ldots, n\},
  \end{equation}
  the least common multiple  of $1,2, \ldots, n$ and $a_n, b_n \in \mathbb{Z}$.

\item According to the Prime Number Theorem \cite{hadamard1896distribution,de1896recherches,ireland2013classical},
for sufficiently large values of $n$, we have
$$
d_n^3 \leq  \left(e^3\right)^n.
$$
 
Besides, the integral (\ref{integral6}) is positive and  bounded above by $2 \left(\frac{1}{24}\right)^n \zeta(3)$. Hence, for sufficiently large $n$, one has 
\[0 < |a_n + b_n \zeta(3)|< 2 \left(\frac{1}{24}\right)^n \zeta(3) d_n^3  <\left(\frac{21}{24}\right)^n 2 \zeta(3) ,\]
    which implies the irrationality of $\zeta(3)$. 
\end{itemize}

In this paper, we make the following main contributions:
\begin{itemize}

\item We introduce and rigorously define shifted Legendre polynomials, formalizing key properties within Lean 4, thus advancing the formalization of special functions in the Lean ecosystem.

\item We provide the first formal proof in Lean 4 of a strengthened version of the Prime Number Theorem, establishing that the prime counting function \(\pi(x)\) is asymptotic to \(\frac{x}{\log x}\).  This key result (Corollary 9 in the \texttt{PrimeNumberTheoremAnd} project) enhances Lean’s capabilities in analytic number theory.

\item  We present a complete formal proof of the irrationality of $\zeta(3)$ in Lean 4, following Beukers' method, contributing to the formal verification of an important result in analytic number theory.

\end{itemize}

To enhance readability, we provide both formal and informal statements of the definitions and theorems. The proofs of the theorems are primarily presented informally, with the corresponding formal proofs available in Lean 4, which can be accessed and downloaded on GitHub\footnote {\url{https://github.com/ahhwuhu/zeta_3_irrational}}. 

The paper is organized as follows. In Section \ref{sec2}, we introduce the concept of Lebesgue integrability in Lean 4, focusing on the lower Lebesgue integral \lstinline{lintegral}, which is more convenient for dealing with multiple integrals. In Section \ref{sec3}, we formally define the shifted Legendre polynomials and establish their fundamental properties in Lean 4. Section \ref{sec4} presents two key results related to the Prime Number Theorem. Finally,  in Section \ref{sec5}, we formally show the irrationality of $\zeta(3)$ in Lean 4.

\section{Lean Preliminaries}\label{sec2}

Lean is an open-source theorem prover with a small trusted kernel based on dependent type theory \cite{de2015lean}. One of its most exciting applications is in training large language models (LLMs) for theorem proving, leveraging Lean 4's formal framework to enable AI systems to assist in automated reasoning and proof generation \cite{yang2023leandojo,song2024largelanguagemodelscopilots}.  Google DeepMind has translated one million problems written in natural language into Lean, without including human-written solutions, for training AlphaProof to solve International Mathematical Olympiad problems at a silver medalist level. By formalizing  the proof of irrationality of $\zeta(3)$ in Lean 4, we
 aim to add some knowledge in the fields of analysis, combinatorics, and number theory to Lean's mathematical library.

As the formal proof of the irrationality of  $\zeta(3)$
 is closely tied to demonstrating that certain improper integrals converge, we begin by introducing basic definitions related to the integrability of functions in \texttt{Mathlib}.

In \texttt{Mathlib}, the integrability of a function $f$ is defined using the concept of the measurability of the function and its Lebesgue integrability over a given domain. The formal definition of integrability of $f$ in  Lean 4 is:

\begin{lstlisting}[frame=single]
def MeasureTheory.Integrable {α} {_ : MeasurableSpace α} (f : α → β)
   (μ : Measure α := by volume_tac) : Prop :=
   AEStronglyMeasurable f μ ∧ HasFiniteIntegral f μ
\end{lstlisting}

The above definition involves two key parts: measurability and integrability. 
The formal definition of measurability in Lean 4 is:

\begin{lstlisting}[frame=single]
def MeasureTheory.AEStronglyMeasurable {_ : MeasurableSpace α} (f : α → β)
   (μ : Measure α := by volume_tac) : Prop := ∃ g, StronglyMeasurable g ∧ f =ᵐ[μ] g
\end{lstlisting}

A function is \lstinline{AEStronglyMeasurable} if it is almost everywhere equal to the limit of a sequence of simple functions. A simple function is a measurable function whose image consists of only a finite set of real numbers, and any simple function can be expressed as a linear combination of a finite number of characteristic functions \cite{hytonen2016analysis}.

The functions considered in this paper are elementary functions, which are functions generated by a finite number of basic operations such as addition, multiplication, inversion, and composition involving basic functions like polynomial functions, rational functions, exponential functions, logarithmic functions, and trigonometric functions. Elementary functions are always measurable, and since measurable functions can be approximated by simple functions, elementary functions are always \lstinline{AEStronglyMeasurable}. Hence, the functions we are considering will be \lstinline{AEStronglyMeasurable} as well.

A function $f$ is Lebesgue integrable if its Lebesgue lower integral over the domain is finite. 
The formal definition of a function having a finite integral in Lean 4 is: 
\begin{lstlisting}[frame=single]
def MeasureTheory.HasFiniteIntegral {_ : MeasurableSpace α} (f : α → β)
   (μ : Measure α := by volume_tac) : Prop := (∫⁻ a, ‖f a‖₊ ∂μ) < ∞
\end{lstlisting}

The ``$\int^-$'' symbol in the definition is a notation for \lstinline{lintegral}, mathematically the lower Lebesgue integral of a $[0, \infty]$ valued function. The lower Lebesgue integral of a function is obtained by approximating the integral from below by 
simple functions and take the infimum of the integrals of those approximating functions. 

The lower Lebesgue integral \lstinline{lintegral} is the extended real-valued version of the integral. In  Lean 4, the set of extended non-negative real numbers $[0, \infty]$   is defined as \lstinline{ENNReal}. According to dependent type theory, the objects on both sides of an equation should have the same type; otherwise, the equation would be ill-typed. Therefore, in all equations appearing later in this article, if one side of the mathematical expression is a
\lstinline{lintegral}, which means the ``$\int^-$'' symbol appears on one side, and the other side is implicitly assumed to be of the type  \lstinline{ENNReal}.
There are two functions \verb|ENNReal.ofReal| and \verb|ENNReal.toReal| which can be used to convert types between non-negative real numbers and \lstinline{ENNReal}.  

To prove the integral of  a function is finite,  we compute the lower Lebesgue integral 
  \lstinline{lintegral}, and check whether it is finite. 

Compared to the standard Lebesgue integral, \lstinline{integral}, the benefit of using \lstinline{lintegral} is that issues of integrability or summability do not arise at all. We only need to calculate the specific \lstinline{lintegral} value.

We can connect the lower Lebesgue integral and integral by the following theorem: 

\begin{lstlisting}[label={lst:integral_eq_lintegral_of_nonneg_ae}, caption={the lower Lebesgue integral and integral}, frame=single]
theorem MeasureTheory.integral_eq_lintegral_of_nonneg_ae {f : α → ℝ} 
    (hf : 0 ≤ᵐ[μ] f) (hfm : AEStronglyMeasurable f μ) :
    ∫ a, f a ∂μ = ENNReal.toReal (∫⁻ a, ENNReal.ofReal (f a) ∂μ) 
\end{lstlisting}

If $f$ is an elementary function, then $f$ must be \verb|AESstronglyMeasurable|. The condition $0\leq^{\mathsf{m}}$[$\mu$] $f$ means that $f$ is  non-negative almost everywhere under the measure $\mu$, i.e., the set of points where 
$f(x)<0$ has measure zero. We can prove it through the following theorem in \texttt{Mathlib}:

\begin{lstlisting}[label={ae_nonneg_restrict_of_forall_setIntegral_nonneg_inter}, caption={function nonnegative almost everywhere}, frame=single]
theorem MeasureTheory.ae_nonneg_restrict_of_forall_setIntegral_nonneg_inter 
    {f : α → ℝ} {t : Set α} (hf : IntegrableOn f t μ) (hf_zero : 
    ∀ s, MeasurableSet s → μ (s ∩ t) < ⊤ → 0 ≤ ∫ x in s ∩ t, f x ∂μ) : 
    0 ≤ᵐ[μ.restrict t] f
\end{lstlisting}

To apply theorem  \cref{lst:integral_eq_lintegral_of_nonneg_ae}, we need to check two conditions: \verb|hf_zero| and \lstinline{IntegrableOn}. 
The condition \verb|hf_zero| can be checked easily due to the non-negativity of the function at every point in  $(0,1)^2$. 
Below, we focus on checking the \lstinline{IntegrableOn} condition.

The condition \verb|IntegrableOn|  is satisfied if a function is integrable with respect to the restricted measure on a set \verb|s|, essentially, it is to prove that the value of \lstinline{lintegral} is finite.  

\begin{lstlisting}[frame = single]
theorem MeasureTheory.hasFiniteIntegral_iff_norm (f : α → β) :
    HasFiniteIntegral f μ ↔ (∫⁻ a, ENNReal.ofReal ‖f a‖ ∂μ) < ∞
\end{lstlisting}

The following  theorem shows  that  \verb|ENNReal.ofReal| and \verb|ENNReal.toReal| can be cancelled out:

\begin{lstlisting}[frame = single]
theorem ENNReal.toReal_ofReal_eq_iff {a : ℝ} : (ENNReal.ofReal a).toReal = a ↔ 0 ≤ a 
\end{lstlisting}

In Lean 4, we can relate an integral in the \texttt{ENNReal} space to a 
\texttt{lintegral} using the following theorem, which allows us to move  \verb|ENNReal.ofReal|
 from outside to inside the \texttt{lintegral}. Formally, the theorem is stated as:

\begin{lstlisting}[label = {lst:ofReal_integral_eq_lintegral_ofReal}, caption = {\texttt{ENNReal} of integral equal to \texttt{lintegral}}, frame = single]
theorem MeasureTheory.ofReal_integral_eq_lintegral_ofReal {f : α → ℝ}
    (hfi : Integrable f μ) (f_nn : 0 ≤ᵐ[μ] f) :
    ENNReal.ofReal (∫ x, f x ∂μ) = ∫⁻ x, ENNReal.ofReal (f x) ∂μ
\end{lstlisting}

Additionally, in the proof of Lemma \ref{eqn:double_integral_eq2}, we will also make use of the substitution formula for integrals:

\begin{lstlisting}[label = {change_of_variables}, caption = {change of variables}, frame = single]
theorem intervalIntegral.integral_comp_mul_deriv {f f' g : ℝ → ℝ}
    (h : ∀ x ∈ uIcc a b, HasDerivAt f (f' x) x)
    (h' : ContinuousOn f' (uIcc a b)) (hg : Continuous g) :
    (∫ x in a..b, (g ∘ f) x * f' x) = ∫ x in f a..f b, g x
\end{lstlisting}

To prove that a multiple integral is equal to  a repeated integral, we use the following theorem in \texttt{Mathlib}:

\begin{lstlisting}[label = {multiple_integral_equal_repeated_integral}, caption = {multiple integral equal to repeated integral}, frame = single]
theorem MeasureTheory.integral_prod (f : α × β → E) (hf : Integrable f (μ.prod ν)) :
    ∫ z, f z ∂μ.prod ν = ∫ x, ∫ y, f (x, y) ∂ν ∂μ 
\end{lstlisting}

The following theorem in \texttt{Mathlib} states that for a non-negative integral function $f$, the integral of $f$ is positive if and only if the measure of the support of $f$ is positive. 

\begin{lstlisting}[label = {lst:integral_pos_iff_support_of_nonneg_ae}, caption = {integral positive iff support nonnegetive almost everywhere},frame = single]
theorem MeasureTheory.integral_pos_iff_support_of_nonneg_ae {f : α → ℝ}
    (hf : 0 ≤ᵐ[μ] f) (hfi : Integrable f μ) :
    (0 < ∫ x, f x ∂μ) ↔ 0 < μ (Function.support f) 
\end{lstlisting}

\section{Shifted Legendre Polynomial}\label{sec3}
The shifted  Legendre polynomials
$$ P_n(x) := \frac{1}{n!}\frac{d^n}{dx^n}[x^n(1-x)^n] $$
have been used in Beukers' proof for constructing a convergent sequence $\{a_n + b_n \zeta(3)\}$. 

In this section, we formally define the shifted Legendre polynomial and outline its fundamental properties in Lean 4. These definitions and properties have been added to \texttt{Mathlib}.

In Lean 4, the shifted  Legendre polynomial is defined as 
\begin{lstlisting}[frame = single]
noncomputable def shiftedLegendre (n : ℕ) : ℝ[X] :=
  C (n ! : ℝ)⁻¹ * derivative^[n] (X ^ n * (1 - X) ^ n)
\end{lstlisting}
where \lstinline{C} is the embedding of $\mathbb R$ into its polynomial ring and \lstinline{X} represents the variable.

By expanding the polynomial  $(x-x^2)^n$, 
and  combining it with the linearity of the derivative operator,  the shifted Legendre polynomials $P_n(x)$ can be written as polynomials with integer coefficients:
\begin{equation}\label{eqn:legendre_int}
P_n(x)=\sum\limits_{k=0}^{n}(-1)^k\binom{n}{k}\binom{n+k}{n}x^k,
\end{equation}
which is formalized as the following theorem in Lean 4:
\begin{lstlisting}[frame = single]
theorem shiftedLegendre_eq_sum (n : ℕ) : shiftedLegendre n =
    ∑ k in Finset.range (n + 1), C ((- 1) ^ k : ℝ) *
    (Nat.choose n k : ℝ[X]) * (Nat.choose (n + k) n : ℝ[X]) * X ^ k 
\end{lstlisting}

One can prove a more abstract version of the above theorem, which generalizes the result as follows:

\begin{lstlisting}[frame = single]
lemma shiftedLegendre_eq_int_poly (n : ℕ) : ∃ a : ℕ → ℤ, shiftedLegendre n =
    ∑ k in Finset.range (n + 1), (a k : ℝ[X]) * X ^ k 
\end{lstlisting}

The shifted Legendre polynomials have good properties for performing integration by parts. For all $n \in \mathbb{N}$ and a  $\mathcal{C}^n$ functions $f : [0, 1]\rightarrow \mathbb{R}$, we have:
\begin{equation}
\int_{0}^{1}P_n(x)f(x) \, dx = \frac{(-1)^n}{n!} \int_{0}^{1} x^n(1-x)^n\frac{d^nf}{dx^n} \, dx
\label{eqn:shifited-lengendre-poly-integral}
\end{equation}

We present  the formalization of equation (\ref{eqn:shifited-lengendre-poly-integral}) for 
\[f (y): = \frac{1}{1-(1-x y)z},\]
which will be used in Lemma \ref{eqn:double_integral_eq1} and Lemma \ref{eqn:double_integral_eq3}, with $0<x,z<1$.

\begin{lemma}\label{n_derivative}
For $0<x,z<1$, we have 
\begin{equation}
\frac{d^n}{dy^n}\left(\frac{1}{1-(1-x y)z}\right) = (-1)^n n!\frac{(xz)^n}{(1-(1-x y)z)^{n+1}}.
\label{eqn:n_derivative}
\end{equation}
\end{lemma}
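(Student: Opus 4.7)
The plan is to proceed by induction on $n$, treating $x$ and $z$ as fixed parameters with $0 < x, z < 1$ and viewing both sides of \eqref{eqn:n_derivative} as functions of $y$. Write $u(y) := 1-(1-xy)z = (1-z) + xyz$, so that $u$ is an affine function of $y$ with $u'(y) = xz$ (a constant), and observe that on the relevant range $y \in [0,1]$ one has $u(y) \geq 1-z > 0$ because $(1-xy)z \leq z$; in particular $u(y) \neq 0$, which is the hypothesis needed to differentiate $1/u^{k}$ freely.

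The base case $n=0$ reduces to $1/u(y) = 1/u(y)$, which is immediate. For the inductive step, suppose
\[
\frac{d^n}{dy^n}\!\left(\frac{1}{u(y)}\right) = (-1)^n n!\,\frac{(xz)^n}{u(y)^{n+1}}.
\]
Differentiating once more in $y$, the chain rule gives
\[
\frac{d}{dy}\!\left(\frac{1}{u(y)^{n+1}}\right) = -(n+1)\,\frac{u'(y)}{u(y)^{n+2}} = -(n+1)\,\frac{xz}{u(y)^{n+2}}.
\]
Multiplying by $(-1)^n n! (xz)^n$ collapses to $(-1)^{n+1}(n+1)!(xz)^{n+1}/u(y)^{n+2}$, completing the induction.

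In Lean 4 the statement is most naturally formulated with \lstinline{iteratedDeriv} applied to the map $y \mapsto 1/(1-(1-xy)z)$. I would first prove a helper lemma of the form \lstinline{HasDerivAt (fun y => 1 / u y ^ (k+1)) (-(k+1) * xz / u y ^ (k+2)) y} whenever $u(y) \neq 0$, either by expanding via \lstinline{HasDerivAt.div}, \lstinline{HasDerivAt.pow}, and the product/chain rules in Mathlib, or by using the already-available \lstinline{HasDerivAt.inv} combined with $\text{pow}$. Then the main induction is routine: the inductive step rewrites \lstinline{iteratedDeriv (n+1)} as \lstinline{deriv (iteratedDeriv n ...)}, applies the induction hypothesis pointwise (using \lstinline{funext} and the nonvanishing of $u$), and invokes the helper lemma together with algebraic simplification of signs, factorials, and powers of $xz$.

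The main obstacle will not be the mathematics, which is a one-line calculation, but the Lean bookkeeping: expressing the $n$-th iterated derivative of a quotient cleanly, discharging side conditions that $u(y)^{k}$ is nonzero in each inductive step, and simplifying the combinatorial factors $(-1)^n n!$ into $(-1)^{n+1}(n+1)!$ after the extra differentiation. A useful tactic tightening is to factor the computation through the affine substitution $t = u(y)$: proving first that $\frac{d^n}{dt^n}(1/t) = (-1)^n n!/t^{n+1}$ by induction, and then using \lstinline{iteratedDeriv_comp} (or an explicit chain-rule induction) for the affine map $y \mapsto (1-z)+xyz$, which contributes the factor $(xz)^n$. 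Either route should work, but the direct induction on $n$ is likely shorter to formalize in Lean.
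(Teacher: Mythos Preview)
Your induction on $n$ via the chain rule is correct and coincides with the paper's treatment of the regular case. The gap is Lean-specific: the lemma is stated as an equality of functions $\mathbb{R}\to\mathbb{R}$ (only $x$ and $z$ carry the hypothesis of lying in $(0,1)$; the variable $y$ is bound and ranges over all of $\mathbb{R}$). After \texttt{funext} you therefore have to establish the pointwise identity for every real $y$, not just for $y\in[0,1]$. But $u(y)=(1-z)+xyz$ vanishes at the single point $y_0=(z-1)/(xz)<0$, so the side condition ``$u(y)\neq 0$, which is the hypothesis needed to differentiate $1/u^{k}$ freely'' that your helper lemma and your inductive step rely on is not available at $y_0$, and the argument as written does not close there.

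The paper deals with this singular point by a separate case analysis exploiting Lean's junk-value conventions: at $y_0$ the right-hand side is a division by $u(y_0)^{n+1}=0$ and hence evaluates to $0$; for the left-hand side one shows, via an $\epsilon$--$\delta$ argument, that the relevant function is not differentiable at $y_0$, so that \texttt{deriv} returns its default value $0$ there as well, and both sides agree. Your plan needs either this extra case, or a reformulation of the target as an equality on a set containing $[0,1]$ but excluding $y_0$, after which you would have to verify that the downstream application in Lemma~\ref{int_shifted_legendre} only requires the restricted version.
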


The formal statement in Lean 4 is: 
\begin{lstlisting}[frame = single]
lemma n_derivative' {x z : ℝ} (n : ℕ) (hx : x ∈ Set.Ioo 0 1) (hz : z ∈ Set.Ioo 0 1) : 
  (deriv^[n] fun y ↦ 1 / (1 - (1 - x * y) * z)) =
  (fun y ↦ (-1) ^ n * n ! * (x * z) ^ n / (1 - (1 - x * y) * z) ^ (n + 1))
\end{lstlisting}

\begin{proof} When we formalize equality (\ref{eqn:n_derivative}) in Lean 4,   we need to discuss whether $1-(1-xy)z$ is equal to $0$ for $x,y,z\in(0, 1)$.
\begin{itemize}
    \item For any $y$, when $1-(1-xy)z \neq 0$,  $\frac{1}{1-(1-xy)z}$ is differentiable to the $n$-th order by induction.  It is straightforward to check the equality (\ref{eqn:n_derivative}).
    
    \item   If $1-(1-xy)z = 0$, then, because functions with a zero denominator are defined as $0$ in Lean 4 (since all functions are total), the values on the right-hand side of equality (\ref{eqn:n_derivative}) are zero.
     In Lean 4, the derivative at points where a function is not differentiable is defined as $0$. Hence,  we prove the left side of equality (\ref{eqn:n_derivative}) is $0$  by demonstrating, using an $\epsilon-\delta$ argument,   that $\frac{1}{1-(1-xy)z}$ is not differentiable at the point $y$ where  $1-(1-xy)z=0$.
      Consequently, the equality (\ref{eqn:n_derivative}) holds with a value of $0$.  
\end{itemize} \hfill $\qedsymbol$
\end{proof}

\begin{lemma}\label{int_shifted_legendre}
For $0<x,z<1$, one has
\begin{equation}
\int_0^1 P_n(y)\frac{1}{1-(1-x y)z} \, d y = \int_0^1  \frac{(x y z)^n(1-y)^n}{(1-(1- x y)z)^{n+1}} \, d y.
\label{eqn:shifited-lengendre-integral-special}
\end{equation}
\end{lemma}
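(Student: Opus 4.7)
The plan is to apply the general integration-by-parts identity (\ref{eqn:shifited-lengendre-poly-integral}) with the specific choice $f(y) := \frac{1}{1-(1-xy)z}$ and then substitute the closed-form expression for $f^{(n)}$ supplied by Lemma \ref{n_derivative}. First I would check that $f$ is of class $\mathcal{C}^n$ on $[0,1]$: since $0<x<1$ and $0<z<1$, for every $y\in[0,1]$ one has $1-xy\in[1-x,1]\subset(0,1]$ and hence $1-(1-xy)z\in[1-z,1)\subset(0,1)$, so the denominator is bounded away from zero on the whole interval, and $f$ is in fact smooth there.

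Having set up the hypotheses of (\ref{eqn:shifited-lengendre-poly-integral}), I would write
\begin{equation*}
\int_0^1 P_n(y)\frac{dy}{1-(1-xy)z} \;=\; \frac{(-1)^n}{n!}\int_0^1 y^n(1-y)^n\, \frac{d^n}{dy^n}\!\left(\frac{1}{1-(1-xy)z}\right)\,dy,
\end{equation*}
and then invoke Lemma \ref{n_derivative}, which replaces the $n$-th derivative by $(-1)^n n!\,(xz)^n/(1-(1-xy)z)^{n+1}$. The prefactors then collapse, since $(-1)^n\cdot(-1)^n=1$ and $n!/n!=1$, leaving
\begin{equation*}
\int_0^1 \frac{y^n(1-y)^n (xz)^n}{(1-(1-xy)z)^{n+1}}\,dy,
\end{equation*}
and a final rewrite $y^n(xz)^n=(xyz)^n$ yields the desired right-hand side.

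In terms of formalization effort, none of the analytic content is deep: the real work is bookkeeping. The main obstacle I anticipate is discharging the $\mathcal{C}^n$ hypothesis needed by the integration-by-parts lemma in Lean, because Mathlib's \verb|ContDiff| API forces one to exhibit continuous $k$-th derivatives for all $k\le n$, and the compositional proof has to track nonvanishing of the denominator carefully. A secondary nuisance is that Lemma \ref{n_derivative} is formulated as an equality of \emph{functions} on all of $\mathbb{R}$ (with the Lean convention that derivatives at non-differentiable points are $0$), so when substituting inside the integral I would congr-rewrite under \verb|MeasureTheory.integral_congr_ae| rather than rewrite pointwise, to avoid having to argue about the boundary behaviour. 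Once these plumbing issues are resolved, the algebraic simplification and the regrouping $y^n(xz)^n=(xyz)^n$ should follow by \verb|ring| and basic \verb|simp| lemmas.
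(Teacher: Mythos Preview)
Your proposal is correct and follows essentially the same approach as the paper: apply the general integration-by-parts identity (\ref{eqn:shifited-lengendre-poly-integral}) to the particular $f(y)=\frac{1}{1-(1-xy)z}$, then substitute the closed form of $f^{(n)}$ from Lemma~\ref{n_derivative} and simplify. The paper's proof is terser (it simply notes that (\ref{eqn:shifited-lengendre-poly-integral}) ``can be proven by induction and integration by parts'' and then invokes (\ref{eqn:n_derivative})), but the structure and the key lemma are the same as yours.
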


The formal statement in Lean 4 is:
\begin{lstlisting}[frame = single]
lemma legendre_integral_special {x z : ℝ} (n : ℕ) (hx : x ∈ Set.Ioo 0 1)
    (hz : z ∈ Set.Ioo 0 1) : 
  ∫ (y : ℝ) in (0)..1, 
    eval y (shiftedLegendre n) * (1 / (1 - (1 - x * y) * z)) =
  ∫ (y : ℝ) in (0)..1, 
    (x * y * z) ^ n * (1 - y) ^ n / (1 - (1 - x * y) * z) ^ (n + 1)
\end{lstlisting}

\begin{proof} It can be proven by induction and integration by parts that
\[ \int_0^1 P_n(y)\frac{1}{1-(1-x y)z} \, dy = \frac{(-1)^n}{n!} \int_0^1 y^n(1-y)^n \frac{d^n}{dy^n}\left(\frac{1}{1-(1-x y)z}\right) \, dy \]

By substituting equality (\ref{eqn:n_derivative}) into the right-hand side of the above equation, we obtain equality (\ref{eqn:shifited-lengendre-integral-special}). \hfill $\qedsymbol$
\end{proof}

\section{Prime Number Theorem}\label{sec4}

Suppose the prime factorization of $d_n = \operatorname{lcm}\{1,..., n\}$ is 
$$
d_n = \prod_{p\leq n} p^m,  \,\, {\rm where}~ m = \max_{k \in \mathbb{N}} \{p^k \leq n\}.
$$
Since $\log$ is strictly monotonic, we have \[m = \left \lfloor \frac{\log n}{\log p} \right \rfloor.\]
We have the following estimate for $d_n$:
$$
d_n = \prod_{p\leq n} p^{\lfloor \frac{\log n}{\log p} \rfloor} \leq \prod_{p\leq n} p^{\frac{\log n}{\log p}} = \prod_{p\leq n} p^{\log_{p} n} = \prod_{p\leq n} n = n^{\pi (n)},
$$
where $\pi(n)$ is the number of primes less than or equal to  $n$.

 The Prime Number Theorem states that:
\[
\lim_{x \to \infty} \frac{\pi(x)}{x / \log x} = 1.
\]

There are several equivalent ways to express the Prime Number Theorem. One of the most common is:
\[  \lim_{x \to \infty} \frac{\pi\left(x\right)}{{ \int_{2}^{x}\frac{dt}{\log t}} } = 1. 
\]


The Prime Number Theorem was first proved in 1896 by Jacques Hadamard \cite{hadamard1896distribution} and by Charles de la Vallée Poussin \cite{de1896recherches} independently. A modern proof was given by 
Atle Selberg and Paul Erdős, 1948, independently  \cite{ireland2013classical}. 
 In \cite{avigad2007formally}, Avigad et al. presented the first formalization of Selberg's elementary proof \cite{selberg1949elementary}  of the Prime Number Theorem using the Isabelle/HOL prover \cite{nipkow2002isabelle}, which was later reproved in Metamath by Carneiro \cite{carneiro2016formalization}. Subsequently, Harrison provided a formal proof of the Prime Number Theorem based on Newman's presentation \cite{newman1980simple} using the HOL-Light prover \cite{harrison2009formalizing}. This work was later extended by Eberl and Paulson in Isabelle \cite{Prime_Number_Theorem-AFP}. Later, Song and Yao present a formalized version of the Prime Number Theorem with an explicit error term in Isabelle \cite{PNT_with_Remainder-AFP}.

Currently, no formal proof of the Prime Number Theorem exists in Lean. Terence Tao, Alex Kontorovich, and others are actively working on the \texttt{PrimeNumberTheoremAnd} 
project in Lean 4.  The goal of this project is to formalize the Prime Number Theorem in Lean, including a classical error term, along with several related results in analytic number theory. A long-term objective is the formalization of the {Chebotarev Density Theorem}—a fundamental theorem in algebraic number theory that generalizes the Prime Number Theorem to Galois extensions of number fields.

We prove Theorem \ref{pi_asymp} and Theorem \ref{pi_alt} corresponding to Theorem 25 and Corollary 9 in the \texttt{PrimeNumberTheoremAnd} project.~\footnote{\url{https://alexkontorovich.github.io/PrimeNumberTheoremAnd/web/sect0004.html}} These constitute the first formal proof in Lean of a strengthened version of the Prime Number Theorem incorporating an error term which is stronger than what had previously been formalized. Furthermore, Theorem \ref{pi_alt} plays a crucial role in the formal proof of the irrationality of \(\zeta(3)\). The formalization of   Theorem \ref{pi_asymp} and Theorem \ref{pi_alt} in Lean 4 has been contributed to the \texttt{PrimeNumberTheoremAnd} project and is publicly available.\footnote{\url{https://github.com/AlexKontorovich/PrimeNumberTheoremAnd/pull/211}}

\begin{theorem}\label{pi_asymp}
  The prime counting function admits the following asymptotic estimate as $ x \to \infty$
  \[
  \pi(x) = \left(1 + o(1)\right)\int_2^{x}\frac{1}{\log t} d t.
  \]
\end{theorem}

The formal statement in Lean 4 is:
\begin{lstlisting}[frame = single]
theorem pi_asymp :
    ∃ c : ℝ → ℝ, c =o[atTop] (fun _ ↦ (1 : ℝ)) ∧
    ∀ᶠ (x : ℝ) in atTop, Nat.primeCounting ⌊x⌋₊ = (1 + c x) * ∫ t in Set.Icc 2 x, 1 / (log t) ∂ volume
\end{lstlisting}

A precise description of the auxiliary constants involved in $o(1)$ and 
the concept of ``sufficiently large" is essential for formalization. Therefore, we present the proof in great detail, ensuring that each step can be easily transcribed into Lean 4.

\begin{proof} We aim to show that $\frac{\pi\left(x\right)}{{ \int_{2}^{x}\frac{1}{\log t}d t} }-1$
    is $o\left(1\right)$, that is, for every $\epsilon$, there exists
    $M_{\epsilon}\in\mathbb{R}$ such that for all $x >M_{\epsilon}$, we have
    \[\left|\frac{\pi\left(x\right)}{{ \int_{2}^{x}\frac{1}{\log t}d t}}-1\right|\le\epsilon.\]
    
    For all $ x \geq 2$, it has been  formalized in Lean 4 that:
    \[
    \pi\left(x\right)={\frac{1}{\log x}\sum_{p\le\lfloor x\rfloor}\log p+{\int_{2}^{x}\frac{\sum_{p\le\lfloor t\rfloor}\log p}{t\log^{2}t}}d t}.
    \]

    We also know that for every $\epsilon> 0$, there exists a function $f_{\epsilon}:\mathbb{R}\to\mathbb{R}$
    such that $f_{\epsilon}=o\left(\epsilon\right)$ and $f_{\epsilon}$ is integrable
    on $\left(2,x\right)$ for all $x \geq 2$. Furthermore,  for $x$ sufficiently
    large, say $x>N_{\epsilon}\ge2$, we have
    \[
    \sum_{p\le\lfloor x\rfloor}\log p=x+xf_{\epsilon}\left(x\right).
    \]
    Hence, for every  $\epsilon>0$ and for sufficiently
    large  $x$, such a function  $f_{\epsilon}$ satisfies 
    \[
    \pi\left(x\right)=\frac{x+xf_{\epsilon}\left(x\right)}{\log x}+\int_{2}^{N_{\epsilon}}\frac{\sum_{p\le\lfloor x\rfloor}\log p}{t\log^{2}t}d t+\int_{N_{\epsilon}}^{x}\frac{t+tf_{\epsilon}\left(t\right)}{t\log^{2}t}d t,
    \]
    which can be  simplified  to
    \[\pi\left(x\right)=\left(\frac{x}{\log x}+\int_{N_{\epsilon}}^{x}\frac{1}{\log^{2}t}d t\right)+\left(\frac{xf_{\epsilon}\left(x\right)}{\log x}+\int_{N_{\epsilon}}^{x}\frac{f_{\epsilon}\left(t\right)}{\log^{2}t}d t\right)+\int_{2}^{N_{\epsilon}}\frac{\sum_{p\le\lfloor x\rfloor}\log p}{t\log^{2}t}d t.
    \]
    Using integration by parts, we obtain
    \[
    \frac{x}{\log x}+\int_{N_{\epsilon}}^{x}\frac{1}{\log^{2}t}d t=\int_{N_{\epsilon}}^{x}\frac{1}{\log t}d t+\frac{N_{\epsilon}}{\log N_{\epsilon}}=\int_{2}^{x}\frac{1}{\log t}d t+\left(\frac{N_{\epsilon}}{\log N_{\epsilon}}-\int_{2}^{N_{\epsilon}}\frac{1}{\log t}d t\right).
    \]
    Hence
    \[
    \pi\left(x\right)=\int_{2}^{x}\frac{1}{\log t}d t+\left(\frac{xf_{\epsilon}\left(x\right)}{\log x}+\int_{N_{\epsilon}}^{x}\frac{f_{\epsilon}\left(t\right)}{\log^{2}t}d t\right)+C_{\epsilon},
    \]
   for some constant $C_{\epsilon}\in\mathbb{R}$. Therefore, we have 
    \[
    \frac{\pi\left(x\right)}{\int_{2}^{x}\frac{1}{\log t}d t}-1=\left(\frac{xf_{\epsilon}\left(x\right)}{\log x}+\int_{N_{\epsilon}}^{x}\frac{f_{\epsilon}\left(t\right)}{\log^{2}t}d t\right)/\int_{2}^{x}\frac{1}{\log t}d t+\frac{C_{\epsilon}}{\int_{2}^{x}\frac{1}{\log t}d t}.
    \]
    
    Recall that $f_{\epsilon}=o\left(\epsilon\right)$, so  for
    all $c>0$, there exists $M_{c,\epsilon}$ such that for all $x>M_{c,\epsilon}$, we have \[\left|f_{\epsilon}\left(x\right)\right|\le c\epsilon.\]
    Therefore, for $x>M_{c,\epsilon}>2$, we have
    \[
    \begin{aligned}\frac{xf_{\epsilon}\left(x\right)}{\log x} & \le\frac{c\epsilon\cdot x}{\log x}\\
    \left|\int_{N_{\epsilon}}^{x}\frac{f_{\epsilon}\left(t\right)}{\log^{2}t}d t\right| & \le\int_{N_{\epsilon}}^{M_{c,\epsilon}}\left|\frac{f_{\epsilon}\left(t\right)}{\log^{2}t}\right|d t+\int_{M_{c,\epsilon}}^{x}\left|\frac{f_{\epsilon}\left(t\right)}{\log^{2}t}\right|d t\\
     & \le\int_{N_{\epsilon}}^{M_{c,\epsilon}}\frac{\left|f_{\epsilon}\left(t\right)\right|}{\log^{2}t}d t+c\epsilon\int_{M_{c,\epsilon}}^{x}\frac{1}{\log^{2}t}d t\\
     & =\int_{N_{\epsilon}}^{M_{c,\epsilon}}\frac{\left|f_{\epsilon}\left(t\right)\right|}{\log^{2}t}d t+c\epsilon\left(\int_{M_{c,\epsilon}}^{x}\frac{1}{\log t}d t+\frac{M_{c,\epsilon}}{\log M_{c,\epsilon}}-\frac{x}{\log x}\right).
    \end{aligned}
    \] 
    Hence,  for $x >M_{c,\epsilon}>2$, we have
    \[
    \begin{aligned}\left|\frac{xf_{\epsilon}\left(x\right)}{\log x}+\int_{N_{\epsilon}}^{x}\frac{f_{\epsilon}\left(t\right)}{\log^{2}t}d t\right| & \le\int_{N_{\epsilon}}^{M_{c,\epsilon}}\frac{\left|f_{\epsilon}\left(t\right)\right|}{\log^{2}t}d t+c\epsilon\left(\int_{M_{c,\epsilon}}^{x}\frac{1}{\log t}d t+\frac{M_{c,\epsilon}}{\log M_{c,\epsilon}}\right)\\
     & =\int_{N_{\epsilon}}^{M_{c,\epsilon}}\frac{\left|f_{\epsilon}\left(t\right)\right|}{\log^{2}t}d t+c\epsilon\left(\int_{2}^{x}\frac{1}{\log t}d t+\frac{M_{c,\epsilon}}{\log M_{c,\epsilon}}-\int_{M_{c,\epsilon}}^{2}\frac{1}{\log t}d t\right).
    \end{aligned}
    \]
    Let $D_{c,\epsilon}$ denote the value of 
    \[\int_{N_{\epsilon}}^{M_{c,\epsilon}}\frac{\left|f_{\epsilon}\left(t\right)\right|}{\log^{2}t}d t+c\epsilon\frac{M_{c,\epsilon}}{\log M_{c,\epsilon}}-c\epsilon\int_{M_{c,\epsilon}}^{2}\frac{1}{\log t}d t,\] we observe that 
    \[
    \begin{aligned}\left|\frac{\pi\left(x\right)}{\int_{2}^{x}\frac{1}{\log t}d t}-1\right| & \le\left(c\epsilon\int_{2}^{x}\frac{1}{\log t}d t+D_{c,\epsilon}\right)/\int_{2}^{x}\frac{1}{\log t}d t+\frac{C_{\epsilon}}{\int_{2}^{x}\frac{1}{\log t}d t}\\
     & =c\epsilon+\frac{D_{c,\epsilon}}{\int_{2}^{x}\frac{1}{\log t}d t}+\frac{C_{\epsilon}}{\int_{2}^{x}\frac{1}{\log t}d t}.
    \end{aligned}
    \]
    In particular, for $c=\frac{1}{2}$, there exists a constant $D$, such that for all $x>\max\left(M_{\frac{1}{2},\epsilon},N_{\epsilon}\right)$, we have
    \[
    \left|\frac{\pi\left(x\right)}{\int_{2}^{x}\frac{1}{\log t}d t}-1\right|\le\frac{\epsilon}{2}+\frac{D}{\int_{2}^{x}\frac{1}{\log t}d t}.
    \]
    
    
    Note that 
    \[\int_{2}^{x}\frac{1}{\log t}d t\ge\frac{\left(x-2\right)}{\log x},\]
     for $x>e^{s}$,  $s>1$, we have the inequality: \[\int_{2}^{x}\frac{1}{\log t}d t\ge\frac{e^{s}-2}{s}.\]
     Consequently, for sufficiently large  $s>A_{\epsilon}>1$, we have \[\frac{D}{\int_{2}^{x}\frac{1}{\log t}d t}\le\frac{sD}{e^{s}-2}\le\frac{sD}{e^{s}}\le\frac{\epsilon}{2}. \]
    Thus, for all $x>\max\left(M_{\frac{1}{2},\epsilon},N_{\epsilon},e^{A_{\epsilon}}\right)$, we obtain
    \[\left|\frac{\pi\left(x\right)}{\int_{2}^{x}\frac{1}{\log t}d t}-1\right|\le\epsilon.\]
    This completes the proof that  $\frac{\pi\left(x\right)}{\int_{2}^{x}\frac{1}{\log t}d t}-1$
    is $o\left(1\right)$ for sufficiently large $x$. \hfill $\qedsymbol$
\end{proof}

Theorem \ref{pi_asymp} allows us to express the asymptotic distribution law of prime numbers as follows:
\begin{theorem}\label{pi_alt}
    The prime counting function $\pi(x)$ satisfies the asymptotic estimate
\[
\pi(x) = (1 + o(1)) \frac{x}{\log x},
\]
as $x \to \infty$.
\end{theorem}

The formal statement in Lean 4 is:
\begin{lstlisting}[frame = single]
theorem pi_alt : ∃ c : ℝ → ℝ, c =o[atTop] (fun _ ↦ (1:ℝ)) ∧
    ∀ x : ℝ, Nat.primeCounting ⌊x⌋₊ = (1 + c x) * x / log x
\end{lstlisting}

\begin{proof} There exists a constant 
$c_1$ such that for sufficiently large 
$x$. we have 
\[ \int_2^{\sqrt{x}} \frac{1}{(\log t)^2} \, d t \leq \frac{1}{(\log 2)^2}(\sqrt{x} - 2) \leq c_1\sqrt{x}.  \]

Similarly, there exists a constant 
$c_2$ such that for sufficiently large 
$x$, we have
\[ \int_{\sqrt{x}}^x \frac{1}{(\log t)^2} \, d t \leq \frac{1}{(\log \sqrt{x})^2}(x-\sqrt{x}) \leq \frac{1}{4(\log x)^2}x \leq c_2 \frac{x}{(\log x)^2}.\]

Since for sufficiently large 
$x$, $(\log x)^2 \leq \sqrt{x}$,  $\sqrt{x} \leq \frac{x}{(\log x)^2}$,  there exists a constant 
$c$ such that 
\[ \int_2^x \frac{1}{(\log t)^2} \, d t \leq  c_1 \frac{x}{(\log x)^2} + c_2 \frac{x}{(\log x)^2}\leq c \frac{x}{(\log x)^2}.\]

By integrating by parts, we obtain:
\begin{equation}\label{pi_alt_aux1}
    \int_2^x \frac{1}{\log t} \, d t= \frac{x}{\log x} - \frac{2}{\log 2} + \int_2^x \frac{1}{(\log t)^2}\, d t.
\end{equation} 
Let
\[ g(x) = \left(\int_2^x \frac{1}{(\log t)^2}\, d t - \frac{2}{\log 2} \right)\frac{\log x}{x}. \]
For sufficiently large $x$, we have
\begin{align*}
    |g| &= \left|\int_2^x \frac{1}{(\log t)^2}\, d t - \frac{2}{\log 2} \right|\frac{\log x}{x}\\
    &\leq \left|\int_2^x \frac{1}{(\log t)^2}\, d t\right| \frac{\log x}{x} + \left| \frac{2}{\log 2} \right|\frac{\log x}{x}\\
    &\leq c \frac{x}{(\log x)^2}\frac{\log x}{x} +  \left| \frac{2}{\log 2} \right|\frac{\log x}{x}\\
    &=  \frac{c}{\log x} +  \left| \frac{2}{\log 2} \right|\frac{\log x}{x}.
\end{align*}
When $x \rightarrow \infty$, 
we have $\frac{1}{\log x} \rightarrow 0$, and $ \frac{\log x}{x} \rightarrow 0$ as $x$ increases much faster than $\log x$. Hence, we have
\[g(x) = o(1).\]

By equation (\ref{pi_alt_aux1}), we obtain
\begin{align*}
    \int_2^x \frac{1}{\log t} \, d t &= \frac{x}{\log x} - \frac{2}{\log 2} + \int_2^x \frac{1}{(\log t)^2}\, d t \\
    &= \frac{x}{\log x} + \left(\left(\int_2^x \frac{1}{(\log t)^2}\, d t - \frac{2}{\log 2} \right)\frac{\log x}{x}\right)\frac{x}{\log x}\\
    &= \left(1 + g(x) \right)\frac{x}{\log x}.
\end{align*}

By Theorem \ref{pi_asymp}, there exists a function $f : \mathbb{R}\to\mathbb{R}$ such that $f=o\left(1\right)$, and for sufficiently large $x$,
\[ \pi(x) = \left(1 + f(x) \right)\int_2^{x}\frac{1}{\log t} d t. \]
For sufficiently large $x$, 
 we can complete the proof by replacing $ \int_2^x \frac{1}{\log t} \, d t$ by $\left(1 + g(x) \right)\frac{x}{\log x}$:
\begin{align*}
    \pi(x) &= \left(1 + f(x) \right)\int_2^{x}\frac{1}{\log t} d t\\
    &= \left(1 + f(x) \right)\left(1 + g(x) \right)\frac{x}{\log x}\\
    &= (1 + o(1))\frac{x}{\log x},
\end{align*}
as $f(x) + g(x) + f(x)g(x)= o(1).$ \hfill $\qedsymbol$
\end{proof}

By Theorem \ref{pi_alt}, for sufficiently large  $n$, we have
$$
d_n \leq n^{\pi (n)} \sim n^{\frac{n}{\log n}} = \left(e ^{\log n}\right)^{\frac{n}{\log n}} = e ^ n.
$$
Consequently, for sufficiently large $n$, we have 
\begin{equation}\label{upperbounddn}
d_n^3 \leq \left(e^n\right)^3 = \left(e^3\right)^n \leq 21^n.
\end{equation}
The upper bound (\ref{upperbounddn}) will be used to bound the sequence of $\{|a_n + b_n \zeta(3)|\}$.

\section{Formal Proof of Irrationality  of $\zeta(3)$ in Lean 4}\label{sec5}

We first consider an essential class of double integrals given in \cite[Lemma 1  (b)]{beukers1979note}. Let $r$ and $s$ be natural numbers. We define
\begin{equation}\label{eqJrs}
J_{rs} := \int_{(x,y)\in(0,1)^2} -\frac{\log (x y)}{1-x y}x^r y^s d\mu .
\end{equation}

The formal definition of the function $J_{rs}$ in  Lean 4 is:
\begin{lstlisting}[frame = single]
noncomputable abbrev J (r s : ℕ) : ℝ := ∫ (x : ℝ × ℝ) in Set.Ioo 0 1 ×ˢ Set.Ioo 0 1,
   -(x.1 * x.2).log / (1 - x.1 * x.2) * x.1 ^ r * x.2 ^ s
\end{lstlisting}

Since $-\frac{\log (x y)}{1-x y}x^r y^s$ is not negative for any $x, y \in (0, 1)$,  we can define
 the lower Lebesgue integral \verb|J_ENN_rs| in Lean 4:
\begin{lstlisting}[frame = single]
noncomputable abbrev J_ENN (r s : ℕ) : ENNReal := 
    ∫⁻ (x : ℝ × ℝ) in Set.Ioo 0 1 ×ˢ Set.Ioo 0 1,
    ENNReal.ofReal (- (x.1 * x.2).log / (1 - x.1 * x.2) * x.1 ^ r * x.2 ^ s)
\end{lstlisting}

For all $x, y \in (0, 1)$,  expanding $\frac{1}{1-xy}$ using the geometric series, we have
\begin{equation}\label{eq12}
-\frac{\log(x y)}{1-x y}x^r y^ s = \sum_{n \in \mathbb{N}} - \log(x y)x^{r+n} y^{s+n}.
\end{equation}

Furthermore, we aim to express 
$J_{rs}$ as $\sum_{n\in\mathbb{N}} \int_{(x,y) \in (0, 1)^2} -\log(xy) x^{n + r}y^{n+s}$. In other words, in Lean 4, we seek to prove the following equality  by interchanging the order of the lower Lebesgue integral $``\int^-"$, summation sign $\sum_{n \in \mathbb{N}}$ and the type conversion
\lstinline{ENNReal.ofReal}:

\begin{lstlisting}[frame = single]
J_ENN r s = ∑' (n : ℕ), ∫⁻ (x : ℝ × ℝ) in Set.Ioo 0 1 ×ˢ Set.Ioo 0 1,
    ENNReal.ofReal (- (x.1 * x.2).log * x.1 ^ (n + r) * x.2 ^ (n + s))
\end{lstlisting}

This is another example of the advantages of using \lstinline{lintegral} over \lstinline{integral}: the lower Lebesgue integral commutes with infinite sums without needing to check integrability or summability conditions. To interchange the order of summation and type conversion, we must verify convergence using the following theorem:
\begin{lstlisting}[frame = single]
theorem ENNReal.ofReal_tsum_of_nonneg {f : α → ℝ}
   (hf_nonneg : ∀ n, 0 ≤ f n) (hf : Summable f) :
   ENNReal.ofReal (∑' n, f n) = ∑' n, ENNReal.ofReal (f n)
\end{lstlisting}

\subsection{Linear Form of $J_{rs}$}

We dedicate this section to demonstrating the following theorem.
\begin{theorem}\label{maintheorem5.1}
The integral $J_{rs}$ (\ref{eqJrs}) can be expressed by the formula, 
\begin{equation}\label{form:Jrs}
J_{rs} = a_{rs}\zeta(3) + \frac{b_{rs}}{d_{\max\{r,s\}}^3},
\end{equation}
where $a_{rs}$ and $b_{rs}$ are integers, and $d_{\max\{r,s\}}= {\rm lcm} \{1, 2, \ldots, {\max\{r,s\}}\}$. 
\end{theorem}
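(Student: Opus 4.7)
The plan is to leverage the series expansion already set up in equation \eqref{eq12} and then evaluate the resulting term-by-term integrals explicitly. Starting from the identity
\[
J_{rs} = \sum_{n=0}^{\infty} \int_{(0,1)^2} -\log(xy)\, x^{n+r} y^{n+s} \, d\mu,
\]
I would first compute the one-dimensional building blocks $\int_0^1 x^k\,dx = \frac{1}{k+1}$ and $\int_0^1 -x^k\log x\,dx = \frac{1}{(k+1)^2}$. Splitting $-\log(xy) = -\log x - \log y$ and applying Fubini to reduce the double integral to a product of one-dimensional integrals, each summand collapses to
\[
\frac{1}{(n+r+1)^2(n+s+1)} + \frac{1}{(n+r+1)(n+s+1)^2}.
\]

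The argument then splits into two cases. When $r = s$, the summand becomes $\frac{2}{(n+r+1)^3}$, and the sum rearranges directly to
\[
J_{rr} = 2\zeta(3) - 2\sum_{k=1}^{r} \frac{1}{k^3}.
\]
Since $k^3 \mid d_r^3$ for every $1 \le k \le r$, the finite sum equals $b_{rr}/d_r^3$ for some integer $b_{rr}$, giving $a_{rr} = 2$ and the desired form.

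For $r \neq s$, the key algebraic observation is the partial-fraction identity
\[
\frac{1}{u^2 v} + \frac{1}{u v^2} = \frac{1}{v-u}\left(\frac{1}{u^2} - \frac{1}{v^2}\right),
\]
applied with $u = n+r+1$ and $v = n+s+1$, so that $v-u = s-r$ is a nonzero constant independent of $n$. The resulting series then telescopes, and (assuming without loss of generality that $r < s$) one obtains
\[
J_{rs} = \frac{1}{s-r}\sum_{k=r+1}^{s} \frac{1}{k^2}.
\]
Here $a_{rs} = 0$, and each term $\frac{1}{(s-r)k^2}$ has denominator dividing $d_s \cdot d_s^2 = d_s^3$, since both $s-r \le s$ and $k \le s$ divide $d_s = \operatorname{lcm}\{1,\dots,s\}$; clearing denominators exhibits the required integer $b_{rs}$.

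I expect the main obstacle to be the formal bookkeeping in Lean 4 rather than any single mathematical step: the interchange of $\sum$ with $\int^{-}$ must be pushed from \texttt{lintegral} back to \texttt{integral} after verifying that each summand is non-negative and that the series converges absolutely; the partial-fraction identity has to be verified with a nonzero-denominator side condition that holds because $u, v \ge 1$ and $s \ne r$; and the telescoping has to be formulated inside Mathlib's \texttt{tsum} framework. Once those infrastructural pieces are in place, the divisibility arguments producing the integers $a_{rs}, b_{rs}$ are routine but require careful unfolding of the definition of $d_{\max\{r,s\}}$.
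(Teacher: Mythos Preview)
Your proposal is correct and follows essentially the same route as the paper: the same series expansion from \eqref{eq12}, the same evaluation of the term-by-term double integral via the one-dimensional building blocks, the same case split into $r=s$ (yielding $2\zeta(3)-2\sum_{k=1}^r 1/k^3$) and $r\neq s$ (handled by the identical partial-fraction/telescoping trick), and the same divisibility argument via $d_{\max\{r,s\}}$. The only cosmetic difference is that the paper assumes $r>s$ rather than $r<s$ in the asymmetric case.
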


The formal statement of the theorem  in Lean 4 is: 
\begin{lstlisting}[frame = single]
lemma linear_int_aux : ∃ a b : ℕ → ℕ → ℤ, ∀ r s : ℕ, J r s =
    b r s * ∑' n : ℕ, 1 / ((n : ℝ) + 1) ^ 3 + a r s / (d (Finset.Icc 1 (Nat.max r s))) ^ 3
\end{lstlisting}

The connection between $J_{rs}$ and $\zeta(3)$ has been recorded in the following lemmas given in \cite[Lemma 1]{beukers1979note}.

\begin{lemma} \label{thm5.2}
The integral $J_{rr}$ can be written as: 
\begin{equation}
J_{r r} = 2\zeta(3) - 2 \sum\limits_{m = 1}^{r}\frac{1}{m^3}.
\label{eqn:Jrr}
\end{equation}
we let $\sum\limits_{m = 1}^{r}\frac{1}{m^3}=0$ for  $r=0$.  In particular, we have $J_{00} = 2\zeta(3)$.
\end{lemma}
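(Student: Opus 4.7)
The plan is to compute $J_{rr}$ directly by expanding the factor $\frac{1}{1-xy}$ as a geometric series and integrating term by term. Using the identity already highlighted in equation (\ref{eq12}) (specialized to $s=r$), one has
\[
-\frac{\log(xy)}{1-xy}x^{r}y^{r} \;=\; \sum_{n=0}^{\infty} -\log(xy)\,x^{n+r}y^{n+r},
\]
and the infrastructure around \lstinline{lintegral} described in Section~\ref{sec2} (in particular \lstinline{ENNReal.ofReal_tsum_of_nonneg} together with the nonnegativity of each summand on $(0,1)^{2}$) lets me commute the sum past the integral and then pass back to a real-valued integral via Listing~\ref{lst:ofReal_integral_eq_lintegral_ofReal} and Listing~\ref{lst:integral_eq_lintegral_of_nonneg_ae}. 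So the first step is to establish the identity
\[
J_{rr} \;=\; \sum_{n=0}^{\infty}\; \int_{(0,1)^{2}} -\log(xy)\,x^{n+r}y^{n+r}\,d\mu .
\]

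Next, I would split $-\log(xy)=-\log x-\log y$ and apply Fubini via \lstinline{integral_prod} to reduce each summand to a product of one-variable integrals. Using the elementary evaluations $\int_{0}^{1} x^{k}\,dx = \tfrac{1}{k+1}$ and $\int_{0}^{1}-x^{k}\log x\,dx = \tfrac{1}{(k+1)^{2}}$, both $\int_{0}^{1}\!\!\int_{0}^{1}-\log x\cdot x^{n+r}y^{n+r}\,dx\,dy$ and the symmetric term with $-\log y$ evaluate to $\tfrac{1}{(n+r+1)^{3}}$, so the $n$th term is $\tfrac{2}{(n+r+1)^{3}}$. Summing and reindexing $k=n+r+1$ yields
\[
J_{rr} \;=\; 2\sum_{k=r+1}^{\infty}\frac{1}{k^{3}} \;=\; 2\zeta(3)-2\sum_{m=1}^{r}\frac{1}{m^{3}},
\]
which is exactly (\ref{eqn:Jrr}); the case $r=0$ is the special instance where the finite sum is empty.

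The main obstacle in Lean will not be the mathematics but the bookkeeping around types and the integrability conditions. Specifically, I expect to spend effort on three things: (i) verifying the hypotheses of \lstinline{ENNReal.ofReal_tsum_of_nonneg} — nonnegativity on $(0,1)^{2}$ is clear, but summability at each point requires the standard estimate $|\log(xy)|\sum (xy)^{n}x^{r}y^{r}<\infty$ for $(x,y)\in(0,1)^{2}$; (ii) computing the one-variable integral $\int_{0}^{1}-x^{k}\log x\,dx = \tfrac{1}{(k+1)^{2}}$, most cleanly via integration by parts with $u=-\log x$, $dv=x^{k}dx$, being careful about the improper endpoint at $0$ where $x^{k+1}\log x\to 0$; and (iii) the reindexing of $\sum_{n\ge 0}\tfrac{2}{(n+r+1)^{3}}$ as $2\zeta(3)-2\sum_{m=1}^{r}\tfrac{1}{m^{3}}$, which in Mathlib will likely go through \lstinline{tsum_eq_sum_add_tsum_nat_add} or an analogous lemma splitting the tail of a summable series. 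Once these are in hand, the $r=0$ special case $J_{00}=2\zeta(3)$ is immediate from the convention that the empty sum is zero.
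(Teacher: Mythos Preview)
Your proposal is correct and follows essentially the same route as the paper: expand $\tfrac{1}{1-xy}$ as a geometric series, commute sum and \texttt{lintegral}, split $-\log(xy)=-\log x-\log y$, reduce via Fubini to the one-variable evaluations $\int_0^1 x^k\,dx=\tfrac{1}{k+1}$ and $\int_0^1 -x^k\log x\,dx=\tfrac{1}{(k+1)^2}$, and then reindex the tail sum. The only organizational difference is that the paper packages the term-by-term computation as the general Lemma~\ref{J_ENN_eq_aux} (valid for arbitrary $r,s$) and then specializes to $r=s$, whereas you work in the diagonal case throughout; also note that the Fubini step should stay on the \texttt{lintegral} side (so \texttt{lintegral\_prod} rather than \texttt{integral\_prod}) to avoid an unnecessary integrability check.
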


The formal statement in Lean 4 is: 
\begin{lstlisting}[frame = single]
theorem J_rr (r : ℕ) : J r r = 2 * ∑' n : ℕ, 1 / ((n : ℝ) + 1) ^ 3 -
    2 * ∑ m in Finset.Icc 1 r, 1 / (m : ℝ) ^ 3 
\end{lstlisting}

\begin{lemma}\label{thm5.3}
For $r,s \in \mathbb{N}$, assume $r \neq s$, we have
\begin{equation}
J_{rs} = \frac{\sum\limits_{m = 1}^{r}\frac{1}{m^2} - \sum\limits_{m = 1}^{s}\frac{1}{m^2}}{r - s}.
\label{eqn:Jrs}
\end{equation}

\end{lemma}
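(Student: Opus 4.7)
The plan is to follow the same general outline as Lemma~\ref{thm5.2}: expand $\tfrac{1}{1-xy}$ as a geometric series so that $J_{rs}$ becomes an infinite sum of elementary double integrals, evaluate those integrals in closed form, and then simplify the resulting sum by partial fractions. The symmetry $J_{rs} = J_{sr}$ lets us assume without loss of generality that $r > s$.

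First I would establish, using the same \lstinline{lintegral} manipulation already exploited above equation~\eqref{eq12}, that
\[
J_{rs} \;=\; \sum_{n=0}^{\infty} \int_{(x,y)\in(0,1)^2} -\log(xy)\, x^{r+n} y^{s+n}\, d\mu.
\]
Splitting $-\log(xy) = -\log x - \log y$ and applying Fubini together with the standard one-variable evaluations $\int_0^1 t^a\,dt = \tfrac{1}{a+1}$ and $\int_0^1 t^a \log t\,dt = -\tfrac{1}{(a+1)^2}$ (both already in Mathlib), the $n$-th term becomes
\[
\frac{1}{(n+r+1)^2(n+s+1)} \;+\; \frac{1}{(n+r+1)(n+s+1)^2}.
\]

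Next, I would perform a partial-fraction decomposition of each of these two rational functions of $n$. A short calculation, using the hypothesis $r \neq s$ so that $r-s$ is invertible, shows that the sum of the two fractions telescopes into the clean form
\[
\frac{1}{r-s}\left(\frac{1}{(n+s+1)^2} - \frac{1}{(n+r+1)^2}\right).
\]
Summing over $n \geq 0$ and reindexing, the right-hand side collapses (for $r > s$) to
\[
\frac{1}{r-s} \sum_{m=s+1}^{r} \frac{1}{m^2} \;=\; \frac{\sum_{m=1}^{r} \tfrac{1}{m^2} - \sum_{m=1}^{s} \tfrac{1}{m^2}}{r-s},
\]
which is the desired identity. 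The case $s > r$ follows from swapping the roles of $r$ and $s$ and noting that both sides of \eqref{eqn:Jrs} are antisymmetric in $(r,s)$; the excluded case $r=s$ is precisely the hypothesis we are given.

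The main obstacle, as in Lemma~\ref{thm5.2}, is the bookkeeping around the exchange of sum and integral and the type juggling between \lstinline{ENNReal} and $\mathbb{R}$. Concretely, the partial-fractions identity must be proved over $\mathbb{R}$ (after clearing denominators, which requires each of $n+r+1$, $n+s+1$, and $r-s$ to be nonzero), whereas the termwise series manipulation is cleanest in \lstinline{ENNReal} via \lstinline{ENNReal.ofReal_tsum_of_nonneg}, so one needs to pass the integrand back to $\mathbb{R}$ before the signed telescoping step. Once the partial-fraction lemma is set up and the telescoping tail $\sum_{n=0}^{\infty} \tfrac{1}{(n+k+1)^2}$ is identified with $\zeta(2) - \sum_{m=1}^{k} \tfrac{1}{m^2}$, the two $\zeta(2)$'s cancel in the difference and the proof reduces to a routine reindexing of a finite sum.
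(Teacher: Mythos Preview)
Your proposal is correct and follows essentially the same route as the paper: both reduce $J_{rs}$ to the series $\sum_{k\ge 0}\bigl(\tfrac{1}{(k+r+1)^2(k+s+1)}+\tfrac{1}{(k+r+1)(k+s+1)^2}\bigr)$, apply the partial-fraction identity $\tfrac{1}{r-s}\bigl(\tfrac{1}{(k+s+1)^2}-\tfrac{1}{(k+r+1)^2}\bigr)$, and telescope to the finite sum $\tfrac{1}{r-s}\sum_{m=s+1}^{r}\tfrac{1}{m^2}$, handling the case $s>r$ by symmetry. The only cosmetic difference is that the paper telescopes directly to the finite sum, whereas your final paragraph detours through $\zeta(2)$ tails that cancel; the direct telescoping is slightly cleaner and avoids introducing $\zeta(2)$ at all.
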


The formal statement in Lean 4 is: 
\begin{lstlisting}[frame = single]
theorem J_rs {r s : ℕ} (h : r ≠ s) : J r s =
    (∑ m in Icc 1 r, 1 / (m : ℝ) ^ 2 - ∑ m in Icc 1 s, 1 / (m : ℝ) ^ 2) / (r - s)
\end{lstlisting}

We will prove Lemma \ref{thm5.2} and Lemma \ref{thm5.3} later. For now, assume they are true, according to equality (\ref{eqn:Jrr}) and equality (\ref{eqn:Jrs}), one can show that for all distinct $r,s \in \mathbb{N}$, there exist integers $z_r$ and $z_{rs}$ such that
\begin{equation}\label{eqn:J_rr_rs}
J_{rr} = 2\zeta(3)-\frac{z_r}{d_r^3}~\text{ and }~J_{rs}=\frac{z_{rs}}{d_r^3}.
\end{equation}
where  $d_r := {\rm lcm} \{1, 2, \ldots, r\}$. 

By equalities (\ref{eqn:J_rr_rs}), we immediately derive a unified form of  $J_{rs}$
  (as given in equalities (\ref{form:Jrs})), thereby concluding the proof of Theorem \ref{maintheorem5.1}. 
 
 To establish the equalities (\ref{eqn:J_rr_rs}),  we begin by calculating a special family of lower Lebesgue integrals which:
$$\int^-_{(x,y)\in(0, 1)^2} -\log(xy) x^{k + r} y^{k+s} d\mu ,$$
for natural numbers $k,r,s$. That is, we formalize the following lemma:
\begin{lemma}\label{J_ENN_eq_aux}
\[
\int^-_{(x,y)\in(0, 1)^2}-\log(xy)x^{k+r}y
^{k+s} d\mu =
\frac{1}{(k+r+1)^2 (k+s+1)} + \frac{1}{(k+r+1)(k+s+1)^2} .
\]
\end{lemma}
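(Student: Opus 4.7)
The plan is to split the logarithm as $-\log(xy) = -\log x + (-\log y)$ and then use Tonelli's theorem to factor each resulting double lower Lebesgue integral into a product of one-dimensional integrals. Concretely, the integrand $-\log(xy)\, x^{k+r} y^{k+s}$ is non-negative on $(0,1)^2$, so we may freely interchange sums and the lintegral, and we get
\begin{equation*}
\int^-_{(0,1)^2} -\log(xy)\, x^{k+r} y^{k+s}\, d\mu = \int^-_{(0,1)^2} (-\log x)\, x^{k+r} y^{k+s}\, d\mu + \int^-_{(0,1)^2} (-\log y)\, x^{k+r} y^{k+s}\, d\mu.
\end{equation*}
By Tonelli (\texttt{MeasureTheory.lintegral\_prod} for non-negative measurable functions, or the already introduced \texttt{integral\_prod} after converting via \texttt{ofReal\_integral\_eq\_lintegral\_ofReal}) each summand factorises as a product of two one-dimensional integrals over $(0,1)$.

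The remaining work is to evaluate the two elementary one-dimensional integrals $\int_0^1 x^{a}\, dx = \frac{1}{a+1}$ and $\int_0^1 (-\log x)\, x^{a}\, dx = \frac{1}{(a+1)^2}$ for $a \in \mathbb{N}$. The first is standard. The second I would obtain by integration by parts with $u = -\log x$ and $dv = x^{a}\, dx$, using that $-\log x \cdot x^{a+1}/(a+1) \to 0$ both as $x\to 0^+$ (since $a+1\ge 1$) and at $x=1$. Plugging $a = k+r$ and $a = k+s$ into the factored expressions yields
\begin{equation*}
\frac{1}{(k+r+1)^2}\cdot\frac{1}{k+s+1} + \frac{1}{k+r+1}\cdot\frac{1}{(k+s+1)^2},
\end{equation*}
which is exactly the claimed right-hand side.

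The main obstacle in Lean 4 will not be the algebra but the plumbing: converting between \texttt{lintegral} and ordinary \texttt{intervalIntegral}, checking that the pieces $(-\log x)\, x^{a}$ and $x^{a}$ are non-negative a.e. and integrable on $(0,1)$ so that Tonelli/Fubini and \texttt{ofReal\_integral\_eq\_lintegral\_ofReal} apply cleanly, and handling the boundary $x = 0$ where $-\log x$ blows up (so one must argue with the a.e.\ identity on $(0,1)$ rather than at the endpoints). Once those measurability and integrability side-conditions are discharged — each of which is routine for elementary functions on $(0,1)$ — the equality follows by direct computation and rearrangement into the common-denominator form stated in the lemma.
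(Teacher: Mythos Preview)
Your proposal is correct and follows essentially the same route as the paper: split $-\log(xy)=-\log x-\log y$, use Tonelli for the lower Lebesgue integral to reduce to iterated one-dimensional integrals, and evaluate the two elementary integrals $\int^-_{(0,1)} x^n = \frac{1}{n+1}$ and $\int^-_{(0,1)} (-\log x)\,x^n = \frac{1}{(n+1)^2}$. The paper packages the latter two as named lemmas (\texttt{ENN\_pow\_integral} and \texttt{ENN\_log\_pow\_integral}) and simply states they are ``directly calculated'' rather than spelling out integration by parts, but otherwise your outline and the paper's proof coincide.
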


The formal statement in Lean 4 is:
\begin{lstlisting}[frame = single]
lemma J_ENN_rs_eq_tsum_aux_intergal (r s k : ℕ) : 
    ∫⁻ (x : ℝ × ℝ) in Set.Ioo 0 1 ×ˢ Set.Ioo 0 1, ENNReal.ofReal (- (x.1 * x.2).log * x.1 ^ (k + r) * x.2 ^ (k + s)) = ENNReal.ofReal (1 / ((k + r + 1) ^ 2 * (k + s + 1)) + 1 / ((k + r + 1) * (k + s + 1) ^ 2))
\end{lstlisting}

\begin{proof} For lower Lebesgue integrals, the double integral is equivalent to the repeated integral without any assumption on integrability.

We regard $x$ as a parameter and integrate $y$ to get
\begin{align*}
    &\int^-_{(x,y)\in(0, 1)^2}-\log(xy)x^{k+r}y^{k+s} d\mu \\
    =& \int^-_{x\in(0, 1)}\int^-_{y\in(0, 1)}-\log(xy)x^{k+r}y^{k+s} \\
    =& \int^-_{x\in(0, 1)}\int^-_{y\in(0, 1)}\left(-\log(x)x^{k+r}y^{k+s} + \left(-\log(y)x^{k+r}y^{k+s}\right)\right) \\
    =& \int^-_{x\in(0, 1)}\int^-_{y\in(0, 1)}-\log(x)x^{k+r}y^{k+s} + \int^-_{x\in(0, 1)}\int^-_{y\in(0, 1)}-\log(y)x^{k+r}y^{k+s}.
\end{align*}

We  consider the following two special integrals, which can be directly calculated :
\[ \int^-_{x\in(0, 1)}-\log(x)x^n = \frac{1}{(n+1)^2}, ~{\rm and}~  ~~ \int^-_{x\in(0, 1)}x^n = \frac{1}{n+1}.    \]
The formal statements in Lean 4 are:
\begin{lstlisting}[frame = single]
lemma ENN_log_pow_integral (n : ℕ) : ∫⁻ (x : ℝ) in Set.Ioo 0 1,
    ENNReal.ofReal (-x.log * x ^ n) = ENNReal.ofReal (1 / (n + 1) ^ 2)
\end{lstlisting}
and 
\begin{lstlisting}[frame = single]
lemma ENN_pow_integral (n : ℕ) : ∫⁻ (x : ℝ) in Set.Ioo 0 1, ENNReal.ofReal (x ^ n) = ENNReal.ofReal (1 / (n + 1))
\end{lstlisting}
Using the above two lemmas twice, one can get
\begin{align*}
  &\int^-_{(x,y)\in(0, 1)^2}-\log(xy)x^{k+r}y^{k+s} d\mu \\
  =& \int^-_{x\in(0, 1)}-\log(x)x^{k+r}\frac{1}{k+s+1} + 
  \int^-_{x\in(0, 1)}x^{k+r}\frac{1}{(k+s+1)^2} \\
  =& \frac{1}{(k+r+1)^2 (k+s+1)} + \frac{1}{(k+r+1)(k+s+1)^2}.
\end{align*}

This proves Lemma \ref{J_ENN_eq_aux}. \hfill $\qedsymbol$
\end{proof}

Let us now proceed with the proofs of Lemma \ref{thm5.2} and Lemma \ref{thm5.3}. 
By equation (\ref{eq12}) and Lemma \ref{J_ENN_eq_aux}, we have
\begin{align}
    &\int^-_{(x,y)\in(0, 1)^2}-\frac{\log (x y)}{1-x y}x^r y^s d\mu \nonumber \\
    =&\sum_{k=0}^{\infty}\left(\frac{1}{(k+r+1)^2 (k+s+1)} + \frac{1}{(k+r+1)(k+s+1)^2} \right). \label{krs}
\end{align}

The formal statement in Lean 4 is:
\begin{lstlisting}[frame = single]
lemma J_ENN_rs_eq_tsum (r s : ℕ) : J_ENN r s = ∑' (k : ℕ), ENNReal.ofReal
    (1 / ((k + r + 1) ^ 2 * (k + s + 1)) + 1 / ((k + r + 1) * (k + s + 1) ^ 2))
\end{lstlisting}

In order to show Lemma \ref{thm5.2} and Lemma \ref{thm5.3}, we discuss the convergence of the series
\[
\sum_{k=0}^{\infty}\left(\frac{1}{(k+r+1)^2 (k+s+1)} + \frac{1}{(k+r+1)(k+s+1)^2}\right),
\]
in two cases: $r = s$ and $r \ne s$ corresponding to Lemma \ref{thm5.2} and Lemma \ref{thm5.3}.

\begin{proof}[Proof of Lemma \ref{thm5.2}] 

For $r = s$, the right side of equality (\ref{krs}) is equal to
\[
 2 \sum_{n = 0}^{\infty}\frac{1}{(n + 1)^3} - 2\sum_{m = 1}^{r} \frac{1}{m^3} = 2\zeta(3)-2\sum_{m=1}^{r}\frac{1}{m^3},
\]
which implies
\begin{equation}\label{eq_lst7}
    \int^-_{(x,y)\in(0, 1)^2}-\frac{\log (x y)}{1-x y}x^r y^s d\mu = 2\zeta(3)-2\sum_{m=1}^{r}\frac{1}{m^3}.
\end{equation}

The formal statement in Lean 4 is:

\begin{lstlisting}[label = {lst:J_ENN_rr}, caption={lintegral form of $J_{rr}$}, frame=single]
lemma J_ENN_rr (r : ℕ) : J_ENN r r = ENNReal.ofReal
    (2 * ∑' n : ℕ, 1 / ((n : ℝ) + 1) ^ 3 - 2 * ∑ m in Finset.Icc 1 r, 1 / (m : ℝ) ^ 3)
\end{lstlisting}

As $\sum_{n=1}^{\infty}\frac1{n^3}$ is convergent, the above series is convergent. Therefore, we have proved Lemma \ref{thm5.2}. \hfill $\qedsymbol$
\end{proof}

\begin{proof}[Proof of Lemma \ref{thm5.3}]
For $ r \neq s$, as the right side of equality (\ref{krs})  is symmetric with respect to $r$ and $s$, without loss of generality, we can assume $r > s$. By rewriting the right side of equality (\ref{krs}), we have
\[
  \begin{aligned}
    & \sum_{k \in \mathbb{N}} \left(\frac{1}{(k + r + 1) ^ 2(k + s + 1)} +\frac{1}{(k + r + 1)(k + s + 1)^2}\right)\\
    =& \sum_{k \in \mathbb{N}} \frac{1}{r - s}\left(\frac{1}{(k + s +1)^2} - \frac{1}{(k + r +1)^2}\right) \\
    =& \frac{1}{r - s}\sum_{k=s+1}^r \frac{1}{k^2}.
  \end{aligned}
\]
As  $\sum_{n=1}^{\infty}\frac1{n^2}$ is convergent, we know that the above series converges. We have then proved Lemma \ref{thm5.3}. \hfill $\qedsymbol$
\end{proof}

Now we proceed to prove Theorem \ref{maintheorem5.1}. 

\begin{proof} The expressions of  $2\zeta(3) - 2\sum_{k=1}^r \frac{1}{k^3}$ and $\frac{1}{r - s} (\sum_{k=1}^r \frac{1}{k^2}-\sum_{k=1}^s \frac{1}{k^2})$   are non-negative as they result from simplifying the non-negative series in   (\ref{krs}).

Let $d_r$ denote the least common multiple from $1$ to $r$.  As every number in $\{1,\dots,r\}$ is divisible by $d_r$, $\sum_{k=1}^r \frac{d_r^3}{k^3}, \sum_{k=1}^s \frac{d_r^2}{k^2}, \sum_{k=1}^r \frac{d_r^2}{k^2}$ and $\frac{d_r}{r - s}$ are integers. Hence, 
we have 
\[
  J_{rr}   d_r^3=  \left(2\zeta(3) - 2\sum_{k=1}^r \frac{1}{k^3}\right) d_r^3 = 2\zeta(3)d_r^3 - 2\sum_{k=1}^r \frac{d_r^3}{k^3}  \in 2\zeta(3)d_r^3 - \mathbb{Z}
\]
and for $r > s$,
\[
   J_{rs} d_r^3 =\left(\frac{1}{r - s} \left(\sum_{k=1}^r \frac{1}{k^2}-\sum_{k=1}^s \frac{1}{k^2}\right)\right)d_r^3 = \frac{d_r}{r - s} \left(\sum_{k=1}^r \frac{d_r^2}{k^2}-\sum_{k=1}^s \frac{d_r^2}{k^2}\right) \in \mathbb{Z}. 
\]

For the case of $r<s$, we can prove that $J_{rs} = J_{sr}$ by multiplying both the numerator and denominator of $J_{rs}$ in equality (\ref{eqn:Jrs}) by $-1$.

Therefore,  we immediately obtain equalities (\ref{eqn:J_rr_rs}), and its formal form in Lean 4: 

\begin{lstlisting}[frame = single]
lemma J_rr_linear (r : ℕ) : ∃ a : ℤ, J r r =
    2 * ∑' n : ℕ, 1 / ((n : ℝ) + 1) ^ 3 - a / (d (Finset.Icc 1 r)) ^ 3

lemma J_rs_linear {r s : ℕ} (h : r > s) : ∃ a : ℤ, J r s = a / d (Finset.Icc 1 r) ^ 3
\end{lstlisting}

By equalities (\ref{eqn:J_rr_rs}), we immediately complete the proof of Theorem \ref{maintheorem5.1}. \hfill $\qedsymbol$
\end{proof}

\subsection{Integer Sequence}
In order to construct an integer sequence of the form $\{a_n + b_n \zeta(3)\}$, we introduce the following integral
\[
\mathfrak{J}_n := \int_{(x,y)\in(0,1)^2} -P_n(x)P_n(y)\frac{\log (x y)}{1-x y}d\mu.
\]

The formal statement in Lean 4 is:
\begin{lstlisting}[frame = single]
noncomputable abbrev JJ (n : ℕ) : ℝ :=
    ∫ (x : ℝ × ℝ) in Set.Ioo 0 1 ×ˢ Set.Ioo 0 1,
    (-(x.1 * x.2).log / (1 - x.1 * x.2) * (shiftedLegendre n).eval x.1 * (shiftedLegendre n).eval x.2)
\end{lstlisting}
where  \lstinline{eval} denotes the evaluation of the shifted Legendre polynomial at a single point.

Since both  $P_n(x)$ and $P_n(y)$ are polynomials with  integer coefficients (\ref{eqn:legendre_int}), we can express   $P_n(x)$ and $P_n(y)$ as  finite sums $\sum\limits_{k=0}^{n}a_n x^k$ and $\sum\limits_{k=0}^{n}a_n y^k$, respectively, where $a_i \in \mathbb{Z}$.
By exchanging the order of the summation and the integral, we obtain
\[
    \mathfrak{J}_n = \sum\limits_{k=0}^{n}\sum\limits_{l=0}^{n} - a_ka_l\int_{(x,y)\in (0,1)^2} x^ky^l\frac{\log(x y)}{1-x y} d\mu = \sum\limits_{k=0}^{n}\sum\limits_{l=0}^{n} a_ka_lJ_{kl}
\]

By equalities (\ref{form:Jrs}) and $d_{\max\{k,l\}} | d_n$ for $k, l\leq n$, one obtains
\[
\mathfrak{J}_n:= \sum\limits_{k=0}^{n}\sum\limits_{l=0}^{n} a_ka_l \left(a_{kl}\zeta(3) + \frac{b_{kl}}{d_{\max\{k,l\}}^3}\right) \in \mathbb{Z}\zeta(3) + \frac{\mathbb{Z}}{d_n^3}.
\]

Hence, we  obtain a sequence  
\[\{ \mathfrak{J}_n \cdot d_n^3= a_n + b_n \zeta(3)\},\]
where $a_n, b_n \in \mathbb{Z}$ for all $n \in \mathbb{N}$. This sequence has been used in \cite[Theorem 2]{beukers1979note} for showing the irrationality of $\zeta(3)$. 

The function $\mathfrak{J}_n \cdot d_n^3$ is formally defined in Lean 4 as follows: 

\begin{lstlisting}[frame = single]
noncomputable abbrev fun1 (n : ℕ) : ℝ := (d (Finset.Icc 1 n)) ^ 3 * JJ n
\end{lstlisting}
We have successfully constructed the sequence  $\{a_n + b_n \zeta(3)\}$. The formal statement of the theorem  in Lean 4 is:  
\begin{lstlisting}[frame = single]
theorem linear_int (n : ℕ) : ∃ a b : ℕ → ℤ, fun1 n = a n + b n *
    (d (Finset.Icc 1 n) : ℤ) ^ 3  * ∑' n : ℕ, 1 / ((n : ℝ) + 1) ^ 3
\end{lstlisting}

\subsection{From Double Integral to Triple Integral}

Given the sequence $\{a_n + b_n \zeta(3)\}$, we need to prove two things: first, that the sequence is non-zero,  and second, that it tends to $0$ as $n \rightarrow \infty$.

First, we prove that the sequence is non-zero by demonstrating its positivity. Since $d_n^3$ is always positive, it suffices to show that $\mathfrak{J}_n$ is positive for all natural numbers  $n$.

However, determining the sign of the shifted Legendre polynomial on the interval $(0,1)$
 is not straightforward, so we prove the result by relating it to the triple integral.

\[
 \mathfrak{J}'_n := \int_{(x,y,z)\in(0,1)^3} \left(\frac{x(1-x)y(1-y)z(1-z)}{1-(1-y z)x}\right)^n \frac{1}{1-(1-yz)x} d\mu.
\]

It is formally defined in Lean 4 as follows:

\begin{lstlisting}[frame = single]
noncomputable abbrev JJ' (n : ℕ) : ℝ :=
    ∫ (x : ℝ × ℝ × ℝ) in Set.Ioo 0 1 ×ˢ Set.Ioo 0 1 ×ˢ Set.Ioo 0 1,
    (x.2.1 * (1 - x.2.1) * x.2.2 * (1 - x.2.2) * x.1 * (1 - x.1) /
    (1 - (1 - x.2.1 * x.2.2) * x.1)) ^ n / (1 - (1 - x.2.1 * x.2.2) * x.1)
\end{lstlisting}

Next, we demonstrate the following theorem: 
\begin{theorem}\label{JJ_eq_JJ'}
For any $n \in \mathbb{N}$,
\[
\mathfrak{J}_n  = \mathfrak{J}'_n.
\]
\end{theorem}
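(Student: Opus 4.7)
The plan is to transform $\mathfrak{J}_n$ into the triple‐integral form $\mathfrak{J}_n'$ by introducing an auxiliary integration variable $z$ that absorbs the logarithm, then applying Lemma \ref{int_shifted_legendre} twice, separated by a key change of variable, to eliminate both Legendre factors $P_n(x)$ and $P_n(y)$.

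First I would prove the elementary identity $-\log(xy)/(1-xy) = \int_0^1 1/(1-(1-xy)z)\, dz$ for $x,y\in(0,1)$, which follows from direct antidifferentiation in $z$. Substituting this into the definition of $\mathfrak{J}_n$ and invoking Fubini (cleanest at the \lstinline{lintegral} level as in Section \ref{sec5}, since the integrand is nonnegative) gives
$$ \mathfrak{J}_n = \int_{(0,1)^3} P_n(x)P_n(y)\frac{1}{1-(1-xy)z}\, d\mu. $$
Treating $x,z\in(0,1)$ as parameters, Lemma \ref{int_shifted_legendre} turns the inner $y$‐integral into $\int_0^1 (xyz)^n(1-y)^n/(1-(1-xy)z)^{n+1}\, dy$.

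The pivotal step is the substitution $w=(1-z)/(1-(1-xy)z)$ in the $z$-integral, holding $x,y$ fixed: a direct calculation yields $z=(1-w)/(1-(1-xy)w)$, $1-(1-xy)z=xy/(1-(1-xy)w)$, and $|dz/dw|=xy/(1-(1-xy)w)^2$. After cancellation and renaming $w$ back to $z$, the factor $(xyz)^n/(1-(1-xy)z)^{n+1}$ collapses to $(1-z)^n/(1-(1-xy)z)$, giving
$$ \mathfrak{J}_n = \int_{(0,1)^3} P_n(x)\frac{(1-y)^n(1-z)^n}{1-(1-xy)z}\, d\mu. $$
I then apply Lemma \ref{int_shifted_legendre} a second time to $\int_0^1 P_n(x)/(1-(1-xy)z)\, dx$ with parameters $y,z$; this is legitimate because $1-(1-xy)z$ is symmetric in $x$ and $y$, so the lemma applies with the roles of $x$ and $y$ swapped. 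The result is
$$ \mathfrak{J}_n = \int_{(0,1)^3} \frac{(xyz)^n(1-x)^n(1-y)^n(1-z)^n}{(1-(1-xy)z)^{n+1}}\, d\mu. $$
To match $\mathfrak{J}_n'$, whose denominator is $(1-(1-yz)x)^{n+1}$, I apply the volume-preserving swap $x\leftrightarrow z$ on $(0,1)^3$, under which the numerator is invariant and $1-(1-xy)z\mapsto 1-(1-zy)x=1-(1-yz)x$, completing the identification.

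The main formalization obstacle will be the substitution step: \lstinline{intervalIntegral.integral_comp_mul_deriv} requires an explicit differentiable reparametrization with continuous derivative and correct endpoint orientation, and verifying that the algebraic simplification of $1-(1-xy)z$ under the substitution holds without division-by-zero pitfalls — in particular, checking that $1-(1-xy)w > 0$ for all $w,x,y\in(0,1)$, which is what ensures the map $z\mapsto w$ is a smooth bijection of $(0,1)$ reversing orientation. The two Fubini interchanges and the final coordinate swap should go through routinely once set up in \lstinline{lintegral} form, paralleling the treatment of $J_{rs}$ in Section \ref{sec5}.
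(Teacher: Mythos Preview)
Your proposal is correct and follows essentially the same path as the paper: the identity $-\log(xy)/(1-xy)=\int_0^1 dz/(1-(1-xy)z)$, Lemma~\ref{int_shifted_legendre} to remove $P_n(y)$, the substitution $w=(1-z)/(1-(1-xy)z)$ (this is exactly the paper's Lemma~\ref{eqn:double_integral_eq2}), Lemma~\ref{int_shifted_legendre} again to remove $P_n(x)$, and a final coordinate relabeling. The one slip is your claim that Fubini works at the \texttt{lintegral} level ``since the integrand is nonnegative'': the intermediate integrands containing $P_n$ factors change sign on $(0,1)$, so the paper instead bounds $|P_n|\le C_n:=\sum_k\binom{n}{k}\binom{n+k}{n}$ and uses $\int_{(0,1)^3} d\mu/(1-(1-xy)z)=2\zeta(3)<\infty$ to obtain ordinary integrability and justify Fubini for each of the signed integrands $f_1,f_2,f_3$.
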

The formal statement in Lean 4 is:

\begin{lstlisting}[frame = single]
theorem JJ_eq_form (n : ℕ) : JJ n = JJ' n
\end{lstlisting}

We prove the equality by the following calculation
\begin{align}
        \mathfrak{J}_n &= \int_{(x,y)\in(0,1)^2} P_n(x)P_n(y)\left(\int_{0}^{1} \frac{1}{1 - (1 - xy)z} \, dz\right)d\mu \label{eqn:subs}\\
        &=\int_{0}^{1}\left(\int_{(x,y)\in(0,1)^2} P_n(x)P_n(y) \frac{1}{1 - (1 - xy)z} d\mu \right) \, dz \label{eqn:comm1}\\
        &=\int_{0}^{1}\left(\int_{(x,y)\in(0,1)^2} \frac{P_n(x)(xyz)^n(1-y)^n}{(1-(1-xy)z)^{n + 1}} d\mu \right) \, dz \label{eqn:double_integral_eq1_in}\\
        &=\int_{(x,y)\in(0,1)^2}\left(\int_{0}^{1} \frac{P_n(x)(1-z)^n(1-y)^n}{1-(1-xy)z}  \, dz\right) d\mu \label{eqn:comm2and}\\
        &=\int_{0}^{1} (1-z)^n \left( \int_{(x,y)\in(0,1)^2}\frac{(xyz(1-x)(1-y))^n}{(1-(1-xy)z)^{n+1}} d\mu \right) \, dz  \label{eqn:double_integral_eq3_in}\\
        &=\mathfrak{J}'_n \label{eqn:repeated_eq_double}
\end{align}

In the course of the above proof, it is necessary to establish the integrability of the following three functions $f_1(x,y,z), f_2(x,y,z)$ and $f_3(x,y,z)$, whose integrability is also  required  in the proofs of equality (\ref{eqn:comm1}), equality (\ref{eqn:comm2and}) and equality (\ref{eqn:double_integral_eq3_in}), respectively: 

\begin{align}
    f_1(x,y,z) =& P_n(x)P_n(y)\frac{1}{1-(1-x y)z}, \label{f1} \\ 
    f_2(x,y,z) =& P_n(x)(xyz)^n(1-y)^n\frac{1}{(1-(1-x y)z)^{n+1}}, \nonumber\\
    =& P_n(x)\left(\frac{xyz(1-y)}{1-(1-x y)z}\right)^n\frac{1}{1-(1-x y)z},\label{f2}\\
    f_3(x,y,z) =& P_n(x)(1-z)^n(1-y)^n\frac{1}{1-(1-x y)z}. \label{f3}
\end{align}

By equation (\ref{eqn:legendre_int}) and absolute value inequality, for any $n \in \mathbb{N}$ and $x \in (0,1)$, 

\[ P_n(x) \leq |P_n(x)| \leq \sum\limits_{k=0}^{n}\left|(-1)^k\binom{n}{k}\binom{n+k}{n}x^k\right| \leq \sum\limits_{k=0}^{n}\binom{n}{k}\binom{n+k}{n} \]

Let $C_n = \sum\limits_{k=0}^{n}\binom{n}{k}\binom{n+k}{n}$, then for any $n \in \mathbb{N}$ and $(x,y,z) \in (0,1)^3$, we have: 
\begin{align}
P_n(x)P_n(y) &\leqslant C_n^2, \nonumber \\
P_n(x)\left(\frac{xyz(1-y)}{1-(1-x y)z}\right)^n &\leqslant C_n\left(\frac{xyz(1-y)}{1-(1-xy)z}\right)^n \leqslant C_n, \nonumber \\
P_n(x)(1-z)^n(1-y)^n &\leqslant C_n. \nonumber
\end{align}

It is necessary for the following inequalities to hold:
$$
\frac{xyz(1-y)}{1-(1-xy)z} = \frac{xyz(1-y)}{1-z+xyz} \leqslant \frac{xyz(1-y)}{xyz} \leqslant 1-y \leqslant 1.
$$

Since a bounded factor does not affect integrability,  to prove that $f_1, f_2$ and $f_3$ have finite integrals over the unit cube $(0,1)^3$, it suffices to show that the function  $\frac{1}{1-(1-x y)z}$ has a finite integral over the unit cube $(0,1)^3$. This  will be proven in \cref{subsectionofeq20}.

\begin{subsubsection}{Proof of the Equality (\ref{eqn:subs})}

\begin{lemma}\label{integral1}
For $0 < a < 1$, one has
\begin{equation}\label{lemma6}
 \int_0^1 \frac{1}{1-(1-a) z} d z = -\frac{\ln a}{1-a},
 \end{equation}
\end{lemma}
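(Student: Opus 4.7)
The plan is to recognize this as an elementary calculus computation and prove it via the fundamental theorem of calculus by exhibiting an antiderivative $F(z) = -\frac{\ln(1-(1-a)z)}{1-a}$ of the integrand on $[0,1]$, and then evaluating $F(1) - F(0)$.

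First I would establish the key positivity fact: for every $z \in [0,1]$, we have $1 - (1-a)z > 0$. Since $0 < a < 1$, we have $0 < 1-a < 1$, so $(1-a)z \in [0, 1-a]$, which gives $1 - (1-a)z \in [a, 1] \subset (0, 1]$. This positivity is essential both for the logarithm to be well-defined and for the integrand to be continuous (hence Lebesgue integrable) on $[0,1]$.

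Next I would verify, either pointwise via the chain rule or by invoking Mathlib's \texttt{HasDerivAt} machinery for compositions of $\ln$ with an affine map, that $F'(z) = \frac{1}{1-(1-a)z}$ for all $z \in [0,1]$. Evaluating at the endpoints gives $F(1) - F(0) = -\frac{\ln a}{1-a} - 0$, which is exactly the right-hand side of (\ref{lemma6}). In Lean 4 this fits cleanly into a call to \texttt{intervalIntegral.integral\_eq\_sub\_of\_hasDerivAt}, with continuity of the integrand used to discharge the integrability hypothesis. Alternatively, one could apply the change-of-variables result \texttt{intervalIntegral.integral\_comp\_mul\_deriv} with $u = 1-(1-a)z$ to reduce the problem to the standard identity $\int_a^1 u^{-1}\, du = -\ln a$, followed by dividing by $1-a$.

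The main obstacle I anticipate is bookkeeping rather than mathematical depth: in Lean one must carefully supply the hypothesis $1-(1-a)z \neq 0$ (equivalently the positivity bound above) at every point where the logarithm, its derivative, or division by $1-a$ appears, and track that $F$ is differentiable throughout the closed interval so that the FTC applies without issue. Once these side conditions are packaged as a single auxiliary lemma about positivity of the denominator on $[0,1]$, the remainder reduces to straightforward algebraic simplification using $\ln 1 = 0$.
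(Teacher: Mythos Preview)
Your proposal is correct and essentially matches the paper's argument. The paper proceeds via the substitution $u=(1-a)z$ to reduce to $\frac{1}{1-a}\int_0^{1-a}\frac{du}{1-u}$ and then applies the fundamental theorem of calculus, which is exactly your stated alternative route (up to the cosmetic difference of substituting $u=(1-a)z$ versus $u=1-(1-a)z$); your primary direct-antiderivative approach is the same computation with the substitution step folded into the choice of $F$.
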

which in Lean 4 is:

\begin{lstlisting}[frame = single]
lemma integral1 {a : ℝ} (ha : 0 < a) (ha1 : a < 1) :
    ∫ (z : ℝ) in (0)..1, 1 / (1 - (1 - a) * z) = - a.log / (1 - a)
\end{lstlisting}

\begin{proof} By substituting $(1-a)z=u$ in the integral (\ref{lemma6}), with $d u=(1-a) d z$, we obtain
$$
\begin{gathered}
\int_0^1 \frac{1}{1-(1-a) z} d z=\frac{1}{1-a} \int_0^{1-a} \frac{1}{1-u} d u =\frac{1}{1-a}[-\ln (1-u)]_0^{1-a}\\
=-\frac{1}{1-a}[\ln a-\ln 1]=-\frac{\ln a}{1-a}.
\end{gathered}$$ \hfill $\qedsymbol$
\end{proof}

Since $0<x,y<1$, it follows that  $0<xy<1$. 
To prove equality (\ref{eqn:subs}), we set  $a=xy$  in the integral (\ref{lemma6}).

\end{subsubsection}

\begin{subsubsection}{Proof of the Equality (\ref{eqn:comm1})}\label{subsectionofeq20}

To prove equality (\ref{eqn:comm1}), we rely on the \verb|MeasureTheory.integral_integral_swap| theorem from \texttt{Mathlib}, which enables the interchange of the order of integration.

\begin{lstlisting}[frame = single]
theorem MeasureTheory.integral_integral_swap ⦃f : α → β → E⦄
    (hf : Integrable (uncurry f) (μ.prod ν)) :
    ∫ x, ∫ y, f x y ∂ν ∂μ = ∫ y, ∫ x, f x y ∂μ ∂ν 
\end{lstlisting}

Here, \lstinline{uncurry f} denotes the transformation of $f$ into a function of type 
$\alpha \times \beta \to E$,  and \lstinline{μ.prod ν} refers to the product measure.

To apply this theorem, we need to demonstrate that $f_1(x,y,z)$ (\ref{f1}) is integrable on $(0,1)^3$. 
It is equivalent to the previously mentioned proof of the integrability of $\frac{1}{1-(1-x y)z}$. A more general version of the result is as follows:

\begin{lemma}\label{lem:double_eq_triple}
For $r, s \in \mathbb{N},$
\begin{equation}\label{lemma9eq}
\int_{(x,y)\in(0,1)^2}^- -\frac{\log (x y)}{1-x y}x^r y^s d\mu =  \int_{(x,y,z)\in(0,1)^3}^- \frac{1}{1-(1-xy)z}x^ry^s d\mu.
\end{equation}
\end{lemma}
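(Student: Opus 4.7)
The plan is to rewrite the left-hand side pointwise using \cref{integral1} and then swap the order of integration via Tonelli's theorem, which, since we work with \lstinline{lintegral} and a non-negative integrand, requires no integrability assumption.

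First, for each fixed $(x,y) \in (0,1)^2$ we have $0 < xy < 1$, so \cref{integral1} applied with $a = xy$ gives
\[
-\frac{\log(xy)}{1-xy} \;=\; \int_0^1 \frac{1}{1-(1-xy)z}\, dz.
\]
Multiplying both sides by the (positive) factor $x^r y^s$ and substituting into the integrand of the left-hand side yields
\[
-\frac{\log(xy)}{1-xy}\, x^r y^s \;=\; \int_0^1 \frac{x^r y^s}{1-(1-xy)z}\, dz
\]
as a pointwise identity on $(0,1)^2$. Since the inner integrand is non-negative and continuous in $z \in (0,1)$, the interval integral here agrees with the lower Lebesgue integral over $(0,1)$, which lets us rewrite the equality entirely in terms of \lstinline{lintegral} (converting through \verb|ENNReal.ofReal| and using a theorem of the shape \verb|ofReal_integral_eq_lintegral_ofReal| as in \cref{lst:ofReal_integral_eq_lintegral_ofReal}, together with the fact that the integrand is non-negative a.e.\ on $(0,1)$).

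Next, I would substitute this pointwise identity under the outer \lstinline{lintegral} via \lstinline{lintegral_congr_ae}, turning the left-hand side of (\ref{lemma9eq}) into
\[
\int^-_{(x,y)\in(0,1)^2}\!\!\left(\int^-_{z\in(0,1)} \frac{1}{1-(1-xy)z}\, d\mu(z)\right) x^r y^s \, d\mu(x,y).
\]
Pulling the factor $x^r y^s$ inside the inner \lstinline{lintegral} (using \lstinline{lintegral_const_mul'} or its companion for scalars depending on $x,y$), and then applying the Tonelli-type theorem \lstinline{MeasureTheory.lintegral_lintegral_swap} / \lstinline{lintegral_prod} to exchange the $(x,y)$- and $z$-integrals produces exactly the right-hand side of (\ref{lemma9eq}). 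No integrability hypothesis is required because \lstinline{lintegral} of non-negative measurable functions always admits this exchange, which is precisely the advantage of working with the lower Lebesgue integral emphasized in \cref{sec2}.

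The main obstacle will be bookkeeping rather than mathematics: one must verify \lstinline{AEMeasurable}/\lstinline{Measurable} hypotheses for the integrand $\frac{1}{1-(1-xy)z} x^r y^s$ on the product space, handle the boundary set $\{xy = 1\}$ (which has measure zero) when converting between \lstinline{intervalIntegral} and \lstinline{lintegral} over the open interval $(0,1)$, and be careful that the move from $-\log(xy)/(1-xy)$ to an inner integral is only valid where the denominator does not vanish — again a measure-zero issue, handled by working almost everywhere. Once these technicalities are in place, the chain of rewrites above closes the proof.
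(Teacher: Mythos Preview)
Your proposal is correct and follows essentially the same route as the paper: both arguments reduce the equality to the pointwise inner-integral identity $\int_{z\in(0,1)}\frac{1}{1-(1-xy)z}\,dz = -\frac{\log(xy)}{1-xy}$ (the paper packages this as the separate \texttt{lintegral}-valued Lemma~\ref{lem:double_eq_triple2} before invoking it, whereas you appeal directly to Lemma~\ref{integral1} and do the \texttt{ofReal}/\texttt{lintegral} conversion inline) and then use Tonelli for lower Lebesgue integrals to pass between the iterated and product forms. One small remark: on the open square $(0,1)^2$ one always has $0<xy<1$ strictly, so the ``boundary set $\{xy=1\}$'' you worry about is actually empty there and needs no almost-everywhere argument.
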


The formal statement in Lean 4 is: 
\begin{lstlisting}[frame = single]
lemma JENN_eq_triple (r s : ℕ) : J_ENN r s =
    ∫⁻ (x : ℝ × ℝ × ℝ) in Set.Ioo 0 1 ×ˢ Set.Ioo 0 1 ×ˢ Set.Ioo 0 1,
    ENNReal.ofReal (1 / (1 - (1 - x.2.1 * x.2.2) * x.1) * x.2.1 ^ r * x.2.2 ^ s)
\end{lstlisting}

Before proving Lemma \ref{lem:double_eq_triple}, we present a result similar to Lemma \ref{integral1} as follows:

\begin{lemma}\label{lem:double_eq_triple2}
For $r, s \in \mathbb{N},$
\begin{equation}\label{lemma10} \int_{z \in (0,1)}^- \frac{1}{1-(1-xy)z}x^ry^s  = \frac{-\log (xy)}{1-xy}x^ry^s. 
\end{equation}
\end{lemma}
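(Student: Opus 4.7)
The plan is to reduce \cref{lem:double_eq_triple2} to \cref{integral1} applied with $a = xy$. Since $x^r y^s$ is constant with respect to the variable of integration $z$, and is non-negative, it factors out of the lower Lebesgue integral cleanly, giving
\[
\int_{z \in (0,1)}^- \frac{1}{1-(1-xy)z}\,x^r y^s = x^r y^s \cdot \int_{z \in (0,1)}^- \frac{1}{1-(1-xy)z}.
\]
So the core work is to show that the remaining \lstinline{lintegral} over $z \in (0,1)$ equals $\frac{-\log(xy)}{1-xy}$ in $\mathrm{ENNReal}$.

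First I would handle the hypotheses: we may assume (and in Lean, case-split to assume) that $x, y \in (0,1)$, so that $xy \in (0,1)$ and in particular $-\log(xy) > 0$ and $1 - xy > 0$, making the right-hand side non-negative. For $z \in (0,1)$ the denominator $1 - (1-xy)z$ lies in $(xy, 1) \subset (0,1)$, so the integrand is positive. This non-negativity is what lets us convert between the standard integral and the \lstinline{lintegral}: I would use \lstinline{MeasureTheory.ofReal_integral_eq_lintegral_ofReal} (\cref{lst:ofReal_integral_eq_lintegral_ofReal}) in the form
\[
\int_{z \in (0,1)}^- \mathrm{ENNReal.ofReal}\!\left(\tfrac{1}{1-(1-xy)z}\right) = \mathrm{ENNReal.ofReal}\!\left(\int_0^1 \tfrac{1}{1-(1-xy)z}\,dz\right),
\]
which requires verifying integrability of $z \mapsto 1/(1-(1-xy)z)$ on $(0,1)$. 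Integrability follows because the function is continuous on the compact interval $[0,1]$ (with $1-xy > 0$ bounding the denominator away from $0$), so it is bounded and hence has finite integral.

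Then \cref{integral1} with $a = xy$ gives $\int_0^1 \frac{1}{1-(1-xy)z}\,dz = -\log(xy)/(1-xy)$, and multiplying by $x^r y^s$ (again via \lstinline{ENNReal.ofReal_mul} after checking non-negativity) yields the claimed equality.

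The main obstacle I anticipate is bookkeeping rather than mathematics: shuffling \lstinline{ENNReal.ofReal} past products and past the integral while discharging the various non-negativity side conditions, and handling the boundary degenerate cases (e.g.\ if Lean's setup allows $x$ or $y$ to be $0$ or $1$ in the statement, both sides evaluate consistently but the argument has to branch). The actual analytic content is entirely supplied by \cref{integral1}; no new estimate is needed.
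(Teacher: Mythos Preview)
Your proposal is correct and follows essentially the same route as the paper: factor the constant $x^{r}y^{s}$ out of the \texttt{lintegral}, convert the remaining \texttt{lintegral} to an ordinary integral via \texttt{ofReal\_integral\_eq\_lintegral\_ofReal} (checking integrability and a.e.\ nonnegativity), and then invoke \cref{integral1} with $a=xy$. The only cosmetic differences are that the paper justifies integrability via the explicit antiderivative rather than compactness, and that your worry about boundary cases is moot since the Lean statement carries the hypothesis $x\in(0,1)^{2}$.
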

The formal statement in Lean 4 is: 
\begin{lstlisting}[frame = single]
lemma JENN_eq_triple_aux (x : ℝ × ℝ) (hx : x ∈ Set.Ioo 0 1 ×ˢ Set.Ioo 0 1) :
    ∫⁻ (w : ℝ) in Set.Ioo 0 1, ENNReal.ofReal (1 / (1 - (1 - x.1 * x.2) * w) * x.1 ^ r * x.2 ^ s) =
    ENNReal.ofReal (-Real.log (x.1 * x.2) / (1 - x.1 * x.2) * x.1 ^ r * x.2 ^ s)
\end{lstlisting}

\begin{proof} Since \verb|ENNReal.ofReal (x^r*y^s)| is independent of $z$,  we can factor it out of the integral on the left side of the equality (\ref{lemma10}). By  comparing  both sides of the equality (\ref{lemma10}), we now aim to  prove the following equation: 
\begin{equation}\label{JENN_eq_triple_aux'}
    \int_{z \in (0,1)}^- \frac{1}{1-(1-xy)z} = \frac{-\log (xy)}{1-xy}.
\end{equation}

The formal statement in Lean 4 is: 
\begin{lstlisting}[frame = single]
lemma JENN_eq_triple_aux' (x : ℝ × ℝ)
    (hx : x ∈ Set.Ioo 0 1 ×ˢ Set.Ioo 0 1) : ∫⁻ (w : ℝ) in Set.Ioo 0 1,
    ENNReal.ofReal (1 / (1 - (1 - x.1 * x.2) * w)) =
    ENNReal.ofReal (-Real.log (1 - (1 - x.1 * x.2)) / (1 - x.1 * x.2))
\end{lstlisting}

By applying  the theorem (\cref{lst:ofReal_integral_eq_lintegral_ofReal}) in~\cref{sec2} in reverse, we can take the \lstinline{ENNReal.ofReal} out of ``$\int^-$" symbolic.
However, to apply the theorem (\cref{lst:ofReal_integral_eq_lintegral_ofReal}), we need to check  the integrability condition first, i.e.,  
we need to prove the function $f(z) = \frac{1}{1-(1-x y)z}$ 
is integrable on $(0,1)$, where $(x,y) \in (0,1)^2$.
The antiderivative of $f(z)$ is 
\[g(z) = \frac{-\log (1-(1-x y)z)}{1-xy}.\]
Since $g$ is continuous and differentiable over the $[0,1]$, $f$ must be integrable on $(0,1)$.

Furthermore, we also need to 
prove that $f$ is greater than or equal to 0 almost everywhere which is the condition \verb|f_nn| in theorem (\cref{lst:ofReal_integral_eq_lintegral_ofReal}). By theorem  (\cref{ae_nonneg_restrict_of_forall_setIntegral_nonneg_inter}), it suffices to verify that   $f(z)$  is nonnegative at every point $(0,1)$, which is straightforward. 
 
Then, by setting  $a= xy$ in Lemma \ref{integral1}, the equality (\ref{JENN_eq_triple_aux'}) can be proved. \hfill $\qedsymbol$
\end{proof}

We now proceed to prove  Lemma \ref{lem:double_eq_triple}: 

\begin{proof} We transform the triple integral on the right-hand side of the equation into a single integral with respect to 
$z$, followed by a double integral over 
$x$ and $y$. 
We then compare both sides of the equation. For any $0 < x, y < 1 $, Lemma \ref{lem:double_eq_triple2} implies  Lemma \ref{lem:double_eq_triple}. \hfill $\qedsymbol$
\end{proof}

 By setting  $r=s=0$ in (\ref{lemma10}) and combining it with the previously proven equality (\ref{eq_lst7}), we can derive:

\begin{equation}\label{triple_eq_zeta3}
    \int_{(x,y,z)\in(0,1)^3}^- \frac{1}{1-(1-yz)x} d\mu = 2\cdot \sum_{n \in \mathbb{N}} \frac{1}{(n + 1)^3}.
\end{equation}

The formal statement in Lean 4 is:
\begin{lstlisting}[frame = single]
∫⁻ (x : ℝ × ℝ × ℝ) in Set.Ioo 0 1 ×ˢ Set.Ioo 0 1 ×ˢ Set.Ioo 0 1, ENNReal.ofReal (1 / (1 - (1 - x.2.1 * x.2.2) * x.1)) =
ENNReal.ofReal (2 * ∑' (n : ℕ), 1 / ((n : ℝ) + 1) ^ 3)
\end{lstlisting}

At this point, since $2\cdot \sum_{n \in \mathbb{N}} \frac{1}{(n + 1)^3}$ is a real number, it naturally follows that $\frac{1}{1-(1-x y)z}$ has a finite integral 
on $(0,1)^3$. Therefore, we can obtain the integrability of \( f_1 \), \( f_2 \) and \( f_3 \) on \( (0,1)^3 \), where the integrability of \( f_1 \) proves the equality (\ref{eqn:comm1}).

\end{subsubsection}

\begin{subsubsection}
{Proof of the Equality (\ref{eqn:double_integral_eq1_in})}

To prove equality \ref{eqn:double_integral_eq1_in}, it suffices to show that the following two functions are equal for any $0 < z < 1$.

\begin{lemma}\label{eqn:double_integral_eq1}
For $0 < z < 1$, one has
\begin{equation}\label{eqlemma11}
\int_{(x,y)\in(0,1)^2} P_n(x)P_n(y) \frac{1}{1 - (1 - xy)z} d\mu= \int_{(x,y)\in(0,1)^2} \frac{P_n(x)(xyz)^n(1-y)^n}{(1-(1-xy)z)^{n + 1}} d\mu
\end{equation}
\end{lemma}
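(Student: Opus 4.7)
The plan is to reduce the equality of double integrals to an equality of inner integrals in the variable $y$, with $x$ held fixed, and then invoke Lemma \ref{int_shifted_legendre} pointwise in $x$. Concretely, fix $z \in (0,1)$. Both integrands coincide, up to the factor $P_n(x)$, with the two sides of the identity
\[
\int_0^1 P_n(y)\,\frac{1}{1-(1-xy)z}\,dy \;=\; \int_0^1 \frac{(xyz)^n(1-y)^n}{(1-(1-xy)z)^{n+1}}\,dy,
\]
which is exactly Lemma \ref{int_shifted_legendre} for the parameters $x,z \in (0,1)$.

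The first step is to apply \verb|MeasureTheory.integral_prod| to rewrite each side as an iterated integral over $(0,1)\times(0,1)$, integrating in $y$ first and then in $x$. This requires verifying that the two integrands are integrable on $(0,1)^2$, but this was already established in Subsection \ref{subsectionofeq20}: the functions $f_1$ and $f_2$ (with the $z$-variable now fixed) are dominated by the bounded quantity $C_n^2$ (respectively $C_n$) times $\tfrac{1}{1-(1-xy)z}$, whose integrability on $(0,1)^2$ (for fixed $z \in (0,1)$) follows from a direct explicit computation (the antiderivative is elementary) or from the triple integrability already proven. After this step both sides become
\[
\int_0^1 P_n(x)\,\Bigl(\int_0^1 (\text{inner integrand in }y)\,dy\Bigr)\,dx,
\]
where on the left the inner integrand is $P_n(y)\cdot\frac{1}{1-(1-xy)z}$ and on the right it is $\frac{(xyz)^n(1-y)^n}{(1-(1-xy)z)^{n+1}}$; note that $P_n(x)$ and the remaining non-$y$ factors pull out of the inner integral by linearity.

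The second step is to apply Lemma \ref{int_shifted_legendre} (its formal Lean version \verb|legendre_integral_special|) pointwise in $x \in (0,1)$, with the given $z \in (0,1)$. This rewrites the inner integral on the left into the inner integral on the right, after which the outer integrals in $x$ are visibly equal and we conclude. The argument is then wrapped with \verb|MeasureTheory.integral_congr_ae| (or the interval variant) to conclude equality of the outer integrals from almost-everywhere equality of the integrands.

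The main obstacle is the bookkeeping required to discharge the integrability hypotheses of \verb|integral_prod| and to pull out $P_n(x)$ as a constant relative to $y$; the actual analytic content is entirely contained in Lemma \ref{int_shifted_legendre}, which has already been formalized. A minor subtlety is that \verb|legendre_integral_special| is stated with an open hypothesis $x \in (0,1)$, so the outer integral in $x$ must be handled on the open interval (or via an almost-everywhere argument, since the boundary has measure zero).
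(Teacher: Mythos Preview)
Your proposal is correct and takes essentially the same route as the paper: rewrite both sides as iterated integrals (inner integral in $y$) and then apply Lemma~\ref{int_shifted_legendre} pointwise for each $x\in(0,1)$. The only minor difference is that the paper justifies integrability more directly by noting that for fixed $z\in(0,1)$ the denominator satisfies $1-(1-xy)z\ge 1-z>0$ on $[0,1]^2$, so both integrands are continuous on a compact set and hence integrable, rather than invoking the domination argument from the three-variable setting.
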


The formal statement in Lean 4 is:
\begin{lstlisting}[frame = single]
lemma double_integral_eq1 (n : ℕ) (z : ℝ) (hz : z ∈ Set.Ioo 0 1) :
    ∫ (x : ℝ × ℝ) in Set.Ioo 0 1 ×ˢ Set.Ioo 0 1, eval x.1 (shiftedLegendre n) * eval x.2 (shiftedLegendre n) * (1 / (1 - (1 - x.1 * x.2) * z)) =
    ∫ (x : ℝ × ℝ) in Set.Ioo 0 1 ×ˢ Set.Ioo 0 1, eval x.1 (shiftedLegendre n) * (x.1 * x.2 * z) ^ n * (1 - x.2) ^ n / (1 - (1 - x.1 * x.2) * z) ^ (n + 1) 
\end{lstlisting}

\begin{proof} For   $0 < z < 1$, let 
\[f(x,y) = \frac{P_n(x)P_n(y)}{1-(1-x y)z},\]
and 
\[g(x,y) = \frac{P_n(x)(xyz)^n(1-y)^n}{(1-(1-x y)z)^{n+1}}. \]

 Since $0<z<1$, the functions $f$ and $g$ have no singular points within the unit square $[0,1]^2$, and both of them are continuously differentiable. Moreover, $f$ and $g$   are integrable on the unit square as they are continuous functions on the compact set $[0, 1]^2$. Hence, to prove  equality (\ref{eqlemma11}), one can   convert the two integrals  $\int_{(x,y)\in(0,1)^2} f(x,y)$ and $ \int_{(x,y)\in(0,1)^2} g(x,y)$
  into repeated integrals and compare the integrands for $x$. In other words, it suffices to prove the following equality: 
\[ \int_0^1 \frac{P_n(y)}{1-(1-x y)z} \, d y = \int_0^1 \frac{(xyz)^n(1-y)^n}{(1-(1-x y)z)^{n+1}} \, d y\]
where $x,z \in (0,1)$. This follows from Lemma \ref{int_shifted_legendre}. \hfill $\qedsymbol$
\end{proof}

\end{subsubsection}

\begin{subsubsection}
{Proof of the Equality (\ref{eqn:comm2and})}

We begin by swapping the order of the double integral for $x$ and $y$
 and the single integral for 
$z$.  This step follows the pattern of equality (\ref{eqn:subs}) and requires the integrability of $f_2(x,y,z)$ (\ref{f2}), which has been proven in \cref{subsectionofeq20}.

To start, we prove the following lemma:

\begin{lemma}\label{eqn:double_integral_eq2}
For $0<x,y<1$, one has
\begin{equation}\label{eqlemma8}
\int_{0}^{1} \frac{P_n(x)(xyz)^n(1-y)^n}{(1-(1-xy)z)^{n + 1}} \, dz = \int_{0}^{1} \frac{P_n(x)(1-z)^n(1-y)^n}{1-(1-xy)z} \, dz
\end{equation}
\end{lemma}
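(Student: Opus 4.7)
The plan is to first observe that the factor $P_n(x)(1-y)^n$ is constant in $z$ and can be pulled out of both integrals, reducing the claim to the identity
\begin{equation*}
\int_{0}^{1} \frac{(xyz)^n}{(1-(1-xy)z)^{n+1}}\,dz \;=\; \int_{0}^{1} \frac{(1-z)^n}{1-(1-xy)z}\,dz
\end{equation*}
for fixed $x,y\in(0,1)$. The core of the argument is a single change of variables; the substitution formula cited as \cref{change_of_variables} is exactly the tool to deploy.

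The substitution I would use is $w = \varphi(z) := \dfrac{1-z}{1-(1-xy)z}$. A direct computation shows $\varphi(0)=1$, $\varphi(1)=0$, and
\begin{equation*}
\varphi'(z) \;=\; -\frac{xy}{(1-(1-xy)z)^{2}},
\end{equation*}
so $\varphi$ is a smooth, strictly decreasing bijection of $[0,1]$ onto itself (note $1-(1-xy)z > xy > 0$ on $[0,1]$, which is what keeps everything well-defined and differentiable). The key algebraic identity driving the transformation is
\begin{equation*}
1 - w \;=\; \frac{xyz}{1-(1-xy)z},
\end{equation*}
and, inverting, $1-(1-xy)z = \dfrac{xy}{1-(1-xy)w}$. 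Substituting these into the LHS integrand shows that $(xyz)^n\cdot(1-(1-xy)z)^{-(n+1)}\cdot|\varphi'(z)|^{-1}$ collapses, after cancellation, precisely to $\dfrac{(1-w)^n}{1-(1-xy)w}$. Reversing the interval orientation (which absorbs the minus sign from $\varphi'$) then gives exactly the RHS with $w$ in place of $z$.

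Concretely, the step order in Lean will be: (i) rewrite both sides pulling the $z$-independent factor out (using \texttt{integral\_mul\_left} or \texttt{integral\_const\_mul}); (ii) set up the hypotheses of \cref{change_of_variables}, namely the pointwise \texttt{HasDerivAt} statement for $\varphi$ and the continuity of $\varphi'$ on $[0,1]$, which follow from the positivity of $1-(1-xy)z$; (iii) apply the substitution to the RHS (not the LHS — it is easier to push the polynomial integrand $g(w) = w^n \cdot (\text{stuff})$ through rather than rational inverses), turning $\int_0^1 \frac{(1-z)^n}{1-(1-xy)z}\,dz$ into an integral over $[\varphi(0),\varphi(1)] = [1,0]$, flip the orientation, and then identify the resulting integrand with the one on the LHS via the algebraic identities above.

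The main obstacle I anticipate is bookkeeping rather than conceptual. Lean requires establishing the positivity of $1-(1-xy)z$ uniformly on $[0,1]$ (which needs the strict bounds $x,y\in(0,1)$ rather than $[0,1]$, so this is where the hypotheses \texttt{hx} and \texttt{hy} enter), together with providing the \texttt{HasDerivAt} witness for $\varphi$ at each point and feeding the correct \texttt{g} to \texttt{integral\_comp\_mul\_deriv}. In practice I expect the hardest sub-goal to be matching the integrand after substitution to the target form via \texttt{field\_simp} and \texttt{ring}, because the denominators $(1-(1-xy)z)^{n+1}$ and $(1-(1-xy)w)$ interact in a way that needs the key identity $1-(1-xy)z = xy/(1-(1-xy)w)$ to be recognized explicitly; this is likely to require a short auxiliary lemma producing the identity, followed by rewriting, rather than leaving it to pure simp normalization.
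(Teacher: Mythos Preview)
Your proposal is correct and follows essentially the same route as the paper: factor out the $z$-independent term $P_n(x)(1-y)^n$, then apply the substitution $w=\dfrac{1-z}{1-(1-xy)z}$ to the right-hand side using the change-of-variables theorem (\cref{change_of_variables}), with the same derivative $\varphi'(z)=-xy/(1-(1-xy)z)^2$ and the same key identity $1-w = xyz/(1-(1-xy)z)$. The implementation concerns you flag (positivity of $1-(1-xy)z$, matching integrands via \texttt{field\_simp}/\texttt{ring}) are exactly where the work lies.
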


The formal statement in Lean 4 is:
\begin{lstlisting}[frame = single]
lemma double_integral_eq2 (n : ℕ) (x : ℝ × ℝ) (hx : x ∈ Set.Ioo 0 1 ×ˢ Set.Ioo 0 1) : ∫ (z : ℝ) in Set.Ioo 0 1, eval x.1 (shiftedLegendre n) * (x.1 * x.2 * z) ^ n * (1 - x.2) ^ n / (1 - (1 - x.1 * x.2) * z) ^ (n + 1) = ∫ (z : ℝ) in Set.Ioo 0 1, eval x.1 (shiftedLegendre n) * (1 - z) ^ n * (1 - x.2) ^ n / (1 - (1 - x.1 * x.2) * z)
\end{lstlisting}

\begin{proof} After removing  the factors $P_n(x)(1-y)^n $ on both sides of equality (\ref{eqlemma8}) that are unrelated to the integral variable $z$,  it is sufficient to show
\begin{equation}
\int_0^1 \frac{(xyz)^n}{(1-(1-x y)z)^{n+1}} \, dz = \int_0^1 \frac{(1-z)^n}{1-(1-x y)z} \, dz
\label{eqn:36}
\end{equation}

If a function $f$ has continuous derivative $f'$ on $[a, b]$, and $g$ is continuous, then by substituting  $u = f (x)$, we have the equality:
\begin{equation}\label{integral_compositon}
    \int_a^b (g \circ f) (x)f'(x) \, dx= \int_{f(a)}^{f(b)} g(u) \, du,
\end{equation}
which follows from the theorem (\cref{change_of_variables}) in \cref{sec2}. 

Let  $f(z) = \frac{1-z}{1-(1-x y)z}$, with $f(0) = 1$ and $f(1)=0$. Then for the right side of equality (\ref{eqn:36}), it takes the following form:
\begin{equation}\label{eq322} \int_0^1 \frac{(1-z)^n}{1-(1-x y)z} \, dz = \int_0^1 \frac{(1-w)^n}{1-(1-x y)w} \, dw = \int_{f(1)}^{f(0)} \frac{(1-w)^n}{1-(1-x y)w} \, dw.
\end{equation}

By substituting $w = f(z)$ into equality (\ref{eq322}) and  use equality (\ref{integral_compositon}), followed by straightforward calculations,   we have the equality  (\ref{eqn:36}):
\begin{align*}
    \int_{f(1)}^{f(0)} \frac{(1-w)^n}{1-(1-x y)w} \, dw &= - \int_{f(0)}^{f(1)} \frac{(1-w)^n}{1-(1-x y)w} \, dw \\
    &= - \int_0^1 \frac{(1-f(z))^n}{1-(1-x y)f(z)}f'(z) \, dz \\
    &= - \int_0^1 (\frac{xyz}{1-(1-x y)z})^n\frac{1-(1-x y)z}{xy}\frac{-xy}{(1-(1-x y)z)^2} \, dz \\
    &= \int_0^1 \frac{(xyz)^n}{(1-(1-x y)z)^{n+1}} \, dz. 
\end{align*} \hfill $\qedsymbol$
\end{proof}

\end{subsubsection}

\begin{subsubsection}
{ Proof of the Equality (\ref{eqn:double_integral_eq3_in})}

To show the equality in (\ref{eqn:double_integral_eq3_in}), we first demonstrate the following:
\begin{align*}
    &\int_{(x,y)\in(0,1)^2}\left(\int_{0}^{1} \frac{P_n(x)(1-z)^n(1-y)^n}{1-(1-xy)z}  \, dz\right) d\mu \\
=&\int_{0}^{1} (1-z)^n \left(\int_{(x,y)\in(0,1)^2}\frac{P_n(x)(1-y)^n}{1-(1-xy)z} d\mu \right) \, dz. 
\end{align*}
It is sufficient to exchange the order of integration and use the integrability of 
$f_3(x,y,z)$ (as shown in  (\ref{f3})), which was established in  \cref{subsectionofeq20}.

We then proceed to formalize the following lemma:
\begin{lemma}\label{eqn:double_integral_eq3}
For $0<z<1$, one has
\[
\int_{(x,y)\in(0,1)^2}\frac{P_n(x)(1-y)^n}{1-(1-xy)z} d\mu = \int_{(x,y)\in(0,1)^2}\frac{(xyz(1-x)(1-y))^n}{(1-(1-xy)z)^{n+1}} d\mu.
\]
\end{lemma}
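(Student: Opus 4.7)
The plan is to reduce this identity to a one-variable statement that is exactly Lemma \ref{int_shifted_legendre} after relabeling the roles of $x$ and $y$. The key observation is that the denominator $1-(1-xy)z$ depends on $x$ and $y$ only through the product $xy$, so it is symmetric in the two variables. Hence, integrating with respect to $x$ with $y$ treated as a parameter is structurally identical to the situation of Lemma \ref{int_shifted_legendre}, in which integration is performed with respect to $y$ with $x$ as the parameter.

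First I would convert both sides of the identity from double integrals over $(0,1)^2$ into iterated integrals via \verb|MeasureTheory.integral_prod|, integrating over $x$ first and then over $y$. Integrability of both integrands on $(0,1)^2$ is needed to apply Fubini's theorem, but this follows from continuity of the integrands on the compact set $[0,1]^2$, together with the uniform bound $1-(1-xy)z \ge 1-z > 0$ for $0<z<1$; these estimates are of the same flavour as those already carried out in Section \ref{subsectionofeq20} for the functions $f_1, f_2, f_3$. The factor $(1-y)^n$ is independent of $x$, so after Fubini I can pull it out of the inner integral on both sides, reducing the desired equality to showing, for each fixed $y\in(0,1)$, the identity
\[
\int_0^1 \frac{P_n(x)}{1-(1-xy)z}\, dx = \int_0^1 \frac{(xyz)^n(1-x)^n}{(1-(1-xy)z)^{n+1}}\, dx.
\]

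This reduced identity is precisely the content of Lemma \ref{int_shifted_legendre}, with the role of the integration variable and the parameter exchanged. Since the expression $1-(1-xy)z$ equals $1-(1-yx)z$, one obtains the reduced identity by invoking Lemma \ref{int_shifted_legendre} with the parameter $x$ there set equal to $y$ here, and rewriting via \verb|mul_comm| where necessary. Hypotheses $y\in(0,1)$ and $z\in(0,1)$ for that application are exactly the ones assumed in the present lemma.

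The only real obstacle I anticipate is bookkeeping in Lean 4: ensuring that the Fubini application matches the product-measure structure used in the statement, pulling the $(1-y)^n$ factor cleanly out of the inner integral, and massaging $xy$ into $yx$ so that the hypothesis of Lemma \ref{int_shifted_legendre} unifies on the nose. Mathematically the proof is a single invocation of the already-established integration-by-parts lemma plus Fubini; there is no new analytic content.
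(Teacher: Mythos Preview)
Your proposal is correct and follows essentially the same approach as the paper: the paper's proof simply states that the argument is the same as for Lemma \ref{eqn:double_integral_eq1}, with the roles of $x$ and $y$ interchanged, which is precisely what you outline (Fubini, pull out $(1-y)^n$, then invoke Lemma \ref{int_shifted_legendre} with the parameter and integration variable swapped using $xy = yx$).
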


The formal statement in Lean 4 is:
\begin{lstlisting}[frame = single]
lemma double_integral_eq3 (n : ℕ) (z : ℝ) (hz : z ∈ Set.Ioo 0 1) : ∫ (x : ℝ × ℝ) in Set.Ioo 0 1 ×ˢ Set.Ioo 0 1,
    (x.1 * x.2 * z * (1 - x.1) * (1 - x.2)) ^ n / (1 - (1 - x.1 * x.2) * z) ^ (n + 1) 
    = ∫ (x : ℝ × ℝ) in Set.Ioo 0 1 ×ˢ Set.Ioo 0 1, eval x.1 (shiftedLegendre n) * (1 - x.2) ^ n / (1 - (1 - x.1 * x.2) * z)
\end{lstlisting}

\begin{proof} The proof is similar to that  of equality (\ref{eqn:double_integral_eq1_in}), achieved by interchanging the role of $x$ and $y$. \hfill $\qedsymbol$
\end{proof}

\end{subsubsection}

\begin{subsubsection}
{ Proof of the Equality (\ref{eqn:repeated_eq_double})} 

First, we move $(1-z)^2$ inside the integral ``$\int_{(x,y)\in(0,1)^2}$". Next, we need to prove that a repeated integral is equal to a triple integral, as stated in Theorem (\cref{multiple_integral_equal_repeated_integral}. This requires proving the following lemma regarding integrability:
\begin{lemma}\label{integrable_lemma}
For any $n \in \mathbb{N}$, the function
\[
\frac{(xyz(1-x)(1-y)(1-z))^n}{(1-(1-xy)z)^{n+1}} =\left(\frac{xyz(1-x)(1-y)(1-z)}{1-(1-xy)z}\right)^n\frac{1}{1-(1-xy)z}
\]
is integrable on $(0,1)^3$.
\end{lemma}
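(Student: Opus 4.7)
The plan is to prove integrability by dominating the integrand pointwise by $\frac{1}{1-(1-xy)z}$, whose integrability on $(0,1)^3$ has already been established via equality (\ref{triple_eq_zeta3}), where its lower Lebesgue integral equals the finite quantity $2\sum_{n\in\mathbb{N}}\frac{1}{(n+1)^3}$.

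First I would verify that the integrand is non-negative and almost-everywhere strongly measurable on $(0,1)^3$. This is straightforward: on the open cube $1-(1-xy)z = 1-z+xyz > 0$, so the denominator is strictly positive everywhere on the domain, and the function is a composition of continuous elementary operations on non-negative factors, hence automatically \lstinline{AEStronglyMeasurable}.

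The core estimate is the pointwise bound
\[
\frac{xyz(1-x)(1-y)(1-z)}{1-(1-xy)z} \le 1,
\]
which I would deduce from the inequality already displayed in Section~\ref{subsectionofeq20}, namely $\frac{xyz(1-y)}{1-(1-xy)z} \le 1-y \le 1$, by multiplying the numerator by the extra factors $(1-x)(1-z) \in (0,1)$, which only decreases the ratio. Raising to the $n$-th power preserves the bound, so $\left(\frac{xyz(1-x)(1-y)(1-z)}{1-(1-xy)z}\right)^n \le 1$, and the full integrand is therefore dominated on $(0,1)^3$ by $\frac{1}{1-(1-xy)z}$.

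To close the argument I would invoke \lstinline{MeasureTheory.hasFiniteIntegral_iff_norm}, reducing integrability to finiteness of the lintegral of the absolute value of the integrand; monotonicity of \lstinline{lintegral} together with the pointwise bound and equality (\ref{triple_eq_zeta3}) then delivers finiteness at once. I expect the main obstacle to be the Lean-side bookkeeping rather than the mathematics: carefully rewriting $1-(1-xy)z = 1-z+xyz$ and certifying that the denominator is strictly positive (not just non-zero) so that the algebraic division inequalities actually apply, and then threading the pointwise bound through \lstinline{ENNReal.ofReal_le_ofReal} and \lstinline{lintegral_mono} without tripping over the \lstinline{ENNReal} coercions.
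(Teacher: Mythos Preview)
Your proof is correct, and simpler than the paper's for the purpose of establishing integrability alone. You dominate the bracketed factor by $1$ using the elementary inequality $\frac{xyz(1-y)}{1-(1-xy)z}\le 1$ (which actually appears just \emph{before} \S\ref{subsectionofeq20}, not in it) and the extra factors $(1-x)(1-z)\le 1$. The paper instead proves the sharper pointwise bound
\[
\frac{x(1-x)y(1-y)z(1-z)}{1-(1-xy)z} < \frac{1}{24}
\]
via the AM--GM step $1-z+xyz \ge 2\sqrt{(1-z)xyz}$ together with $\sqrt{t}(1-t)\le \frac{2}{5}$ and $\sqrt{z(1-z)}\le\frac12$. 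Both routes then invoke (\ref{triple_eq_zeta3}) to conclude finiteness of the lower Lebesgue integral. Your argument buys brevity at this step; the paper's argument buys more downstream: the $\frac{1}{24}$ bound is precisely what is reused to obtain $\mathfrak{J}_n \le 2(1/24)^n\zeta(3)$ in Theorem~\ref{thm:bound}, which in turn drives the convergence in Theorem~\ref{thm5}. With your approach that sharper inequality would still have to be proved separately later, so the paper is effectively front-loading work it needs anyway.
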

The formal statement in Lean 4 is:
\begin{lstlisting}[frame = single]
lemma integrableOn_JJ' (n : ℕ) : MeasureTheory.Integrable
    (fun (x : ℝ × ℝ × ℝ) ↦ (x.2.1 * (1 - x.2.1) * x.2.2 * (1 - x.2.2) * x.1 * (1 - x.1) / (1 - (1 - x.2.1 * x.2.2) * x.1)) ^ n /
    (1 - (1 - x.2.1 * x.2.2) * x.1)) (MeasureTheory.volume.restrict (Set.Ioo 0 1 ×ˢ Set.Ioo 0 1 ×ˢ Set.Ioo 0 1))
\end{lstlisting}

We define a $[0, \infty]$ valued function $\mathcal{J}_n$:
\[
\mathcal{J}_n := \int^{-}_{(x,y,z) \in (0,1)^3} \left(\frac{xyz(1-x)(1-y)(1-z)}{1-(1-xy)z}\right)^n\frac{1}{1-(1-xy)z} d\mu.
\] 
The formal definition  in Lean 4 is:
\begin{lstlisting}[frame = single]
noncomputable abbrev JJENN (n : ℕ) : ENNReal := ∫⁻ (x : ℝ × ℝ × ℝ) in Set.Ioo 0 1 ×ˢ Set.Ioo 0 1 ×ˢ Set.Ioo 0 1, ENNReal.ofReal ((x.2.1 * (1 - x.2.1) * x.2.2 * (1 - x.2.2) * x.1 * (1 - x.1) / (1 - (1 - x.2.1 * x.2.2) * x.1)) ^ n / (1 - (1 - x.2.1 * x.2.2) * x.1))
\end{lstlisting}

\begin{proof}[Proof of Lemma \ref{integrable_lemma}]
We need to prove that $\mathcal{J}_n$ has a finite integral.   It suffices  to show that
\begin{equation}\label{ineq35}
\mathcal{J}_n \leq 2 \left(\frac{1}{24}\right)^n \sum\limits_{n \in \mathbb{N}} \frac{1}{(n+1)^3} = 2\left( \frac{1}{24}\right)^n \zeta(3).
\end{equation}

The formal statement in Lean 4 is:
\begin{lstlisting}[frame = single]
lemma JJENN_upper (n : ℕ) : JJENN n ≤ ENNReal.ofReal
    (2 * (1 / 24) ^ n * ∑' n : ℕ, 1 / ((n : ℝ) + 1) ^ 3)
\end{lstlisting}

To prove  inequality (\ref{ineq35}), we first  demonstrate that that for all $x,y,z \in (0, 1)$, the following holds:
\begin{equation}\label{ineq352}
\frac{x(1-x)y(1-y)z(1-z)}{1-(1-xy)z} < \frac1{24}.
\end{equation}
The formal statement  in Lean 4 is: 
\begin{lstlisting}[frame = single]
lemma bound' (x y z : ℝ) (x0 : 0 < x) (x1 : x < 1) (y0 : 0 < y) (y1 : y < 1) (z0 : 0 < z) (z1 : z < 1) :
    x * (1 - x) * y * (1 - y) * z * (1 - z) / (1 - (1 - x * y) * z) < (1 / 24 : ℝ) 
\end{lstlisting}

From the inequality
\[ 1-(1-xy)z = 1-z + xyz \geqslant 2\sqrt{1-z}\sqrt{xyz},  \]
we can deduce that for  $x,y,z \in (0, 1)$, the following holds:  
\begin{align*}
    \frac{x(1-x)y(1-y)z(1-z)}{(1-(1-xy)z)} \leqslant& \frac{x(1-x)y(1-y)z(1-z)}{2\sqrt{1-z}\sqrt{xyz}}\\
    =&\frac{\sqrt{x}(1-x)\sqrt{y}(1-y)\sqrt{z}\sqrt{1-z}}{2}.
\end{align*}

For $z\in (0,1)$, the maximum value of $\sqrt{z}\sqrt{1-z} $ is attained  at $z=\frac{1}{2}$.
For $y \in (0,1)$, we have 
\[y(1-y)^2 - \frac{4}{27} = (y - \frac{4}{3})(y - \frac{1}{3})^2 \leqslant 0.\]
Hence, for $y \in (0,1)$, we have
\[ \sqrt{y}(1-y) = \sqrt{y(1-y)^2} \leqslant \sqrt{\frac{4}{27}} \leqslant \sqrt{\frac{4}{25}} = \frac{2}{5}. \]
Finally,   inequality (\ref{ineq352}) is proven
\begin{align*}
    \frac{x(1-x)y(1-y)z(1-z)}{(1-(1-xy)z)} \leqslant \frac{2}{5}\cdot\frac{2}{5}\cdot\frac{1}{2}\cdot\frac{1}{2} = \frac{1}{25} < \frac{1}{24}
\end{align*}

Therefore,   inequality (\ref{ineq35}) holds:
\begin{align*}
    & \mathcal{J}_n =\int^{-}_{(x,y,z) \in (0,1)^3} \left(\frac{xyz(1-x)(1-y)(1-z)}{1-(1-xy)z}\right)^n\frac{1}{1-(1-xy)z} d\mu\\
    \leq& \int^{-}_{(x,y,z) \in (0,1)^3} \left(\frac{1}{24}\right)^n\frac{1}{1-(1-xy)z} d\mu\\
    = & \left(\frac{1}{24}\right)^n \int^{-}_{(x,y,z) \in (0,1)^3} \frac{1}{1-(1-xy)z} d\mu \\
    = & \left(\frac{1}{24}\right)^n 2 \zeta(3).
\end{align*}
Consequently, we formalize \lstinline{JJENN_upper} by using equation (\ref{lemma9eq}) and equation (\ref{triple_eq_zeta3}) with $r = 0$. 

\hfill $\qedsymbol$
\end{proof}

\end{subsubsection}

\begin{subsection}{Positive Sequences Converging to Zero}

First, we establish that $\mathfrak{J}_n$ is always positive.
\begin{theorem}\label{thm:pos}
For any $n \in \mathbb{N}$,
\[\mathfrak{J}_n > 0. \]
\end{theorem}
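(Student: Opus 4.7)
The plan is to reduce the claim $\mathfrak{J}_n>0$ to $\mathfrak{J}'_n>0$ via Theorem~\ref{JJ_eq_JJ'}. This is the whole point of going through the triple-integral representation: the integrand defining $\mathfrak{J}_n$ involves the shifted Legendre polynomials $P_n(x)P_n(y)$, whose signs on $(0,1)$ oscillate and are awkward to control, whereas the integrand defining $\mathfrak{J}'_n$ is a manifestly nonnegative algebraic expression on $(0,1)^3$.

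To prove $\mathfrak{J}'_n>0$ I would invoke \texttt{MeasureTheory.integral\_pos\_iff\_support\_of\_nonneg\_ae} (the last boxed theorem of Section~\ref{sec2}). This theorem reduces positivity of an integral to three ingredients: (i) integrability of the integrand; (ii) nonnegativity almost everywhere; and (iii) positive measure of the support. Ingredient~(i) is precisely Lemma~\ref{integrable_lemma}, which is already available. For (ii) and (iii) the key point is that the denominator admits the identity
\[
1-(1-yz)x \;=\; (1-x) + xyz,
\]
which is a strict sum of two positive quantities whenever $x,y,z\in(0,1)$. Together with the strict positivity of the factors $x(1-x)$, $y(1-y)$, $z(1-z)$ on $(0,1)$, this forces the integrand
\[
\Phi_n(x,y,z) \;:=\; \left(\frac{x(1-x)y(1-y)z(1-z)}{1-(1-yz)x}\right)^{n}\frac{1}{1-(1-yz)x}
\]
to be strictly positive at every point of $(0,1)^3$. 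Hence the support of $\Phi_n$ contains the full open cube, whose Lebesgue measure equals $1>0$, giving~(iii); and (ii) holds at every point, not merely almost everywhere.

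I do not expect any mathematical obstacle: the proof is essentially an application of a library lemma after checking pointwise positivity. The main friction is Lean bookkeeping, namely rewriting $1-(1-yz)x$ in the form $(1-x)+xyz$ to expose positivity, discharging the side conditions of \texttt{integral\_pos\_iff\_support\_of\_nonneg\_ae} in the right order (integrability from Lemma~\ref{integrable_lemma}, a.e.~nonnegativity via the pointwise bound, and positive measure of the support via \verb|Set.Ioo 0 1 ×ˢ Set.Ioo 0 1 ×ˢ Set.Ioo 0 1|), and marshalling the product-measure API so that the measure of the open cube is recognized as $1$.
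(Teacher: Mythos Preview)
Your proposal is correct and follows essentially the same route as the paper: reduce to $\mathfrak{J}'_n>0$ via Theorem~\ref{JJ_eq_JJ'}, then apply \texttt{integral\_pos\_iff\_support\_of\_nonneg\_ae} with integrability supplied by Lemma~\ref{integrable_lemma}, pointwise strict positivity of the integrand on $(0,1)^3$ giving both the a.e.\ nonnegativity and the positive measure of the support. The only addition in your write-up is the explicit rewrite $1-(1-yz)x=(1-x)+xyz$ to expose positivity of the denominator, which the paper leaves implicit.
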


The formal statement in Lean 4 is:
\begin{lstlisting}[frame = single]
    theorem JJ_pos (n : ℕ) : 0 < JJ n
\end{lstlisting}

\begin{proof}
By Theorem \ref{JJ_eq_JJ'}, it suffices to  prove that $\mathfrak{J}^{\prime}_n$  is always positive.

We apply the theorem (\cref{lst:integral_pos_iff_support_of_nonneg_ae}) from  \texttt{Mathlib} in our proof.  It suffices to show that the measure of the support of the function is larger than $0$.

In this case, the measure $\mu$ is the measure on $\mathbb{R}$ restricted to $(0,1)^3$. Let \[f(x,y,z) = \left(\frac{x(1-x)y(1-y)z(1-z)}{1-(1-x y)z}\right)^n\frac{1}{1-(1-x y)z}. \]

We can verify that   for any $(x,y,z) \in (0,1)^3$,
we have 
\[f(x,y,z) > 0.\]
Therefore, 
$(0,1)^3 \subset \operatorname{supp} f$, and the measure of $\operatorname{supp} f$ must be greater than that of  $(0,1)^3$, which is positive. 

The integrability condition  \verb|hfi| is provided by  Lemma \ref{integrable_lemma}. As for the condition  \verb|hf| in theorem (\cref{lst:integral_pos_iff_support_of_nonneg_ae}), we need to show $f(x,y,z)$ is almost everywhere greater than or equal to $0$. Similar to the proof of Lemma \ref{lem:double_eq_triple2}, this can be established by using Theorem (\cref{ae_nonneg_restrict_of_forall_setIntegral_nonneg_inter}), Lemma \ref{integrable_lemma}, and the fact that $f$ is positive on $(0,1)^3$. \hfill $\qedsymbol$
\end{proof}

By theorem (\cref{lst:ofReal_integral_eq_lintegral_ofReal}), we can move \verb|ENNReal.ofReal| from inside to outside the integral. 
We obtain the following theorem:

\begin{theorem}\label{thm:bound}
For all $n \in \mathbb{N}$,
\[ 
\mathfrak{J}_n \leq 2\cdot \left(\frac{1}{24}\right)^n \sum\limits_{n \in \mathbb{N}} \frac{1}{(n+1)^3} = 2\cdot\left( \frac{1}{24}\right)^n \zeta(3).
\]
\end{theorem}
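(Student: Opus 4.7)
The plan is to reduce the real-valued bound to the \texttt{ENNReal}-valued bound that has already been established within Lemma \ref{integrable_lemma}. First, I would apply Theorem \ref{JJ_eq_JJ'} to replace $\mathfrak{J}_n$ by $\mathfrak{J}'_n$. Writing
\[ f(x,y,z) := \left(\frac{x(1-x)y(1-y)z(1-z)}{1-(1-xy)z}\right)^n \frac{1}{1-(1-xy)z}, \]
the task reduces to bounding $\mathfrak{J}'_n = \int_{(0,1)^3} f \, d\mu$.

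Next, I would observe that on $(0,1)^3$ the denominator $1-(1-xy)z = 1-z+xyz$ is strictly positive, so $f$ is pointwise non-negative, and hence also $\mu$-almost everywhere non-negative. Combined with the integrability of $f$ supplied by Lemma \ref{integrable_lemma}, this lets me invoke \cref{lst:ofReal_integral_eq_lintegral_ofReal} to obtain
\[ \mathrm{ENNReal.ofReal}(\mathfrak{J}'_n) = \mathcal{J}_n, \]
where $\mathcal{J}_n$ is the \verb|JJENN| introduced just above Lemma \ref{integrable_lemma}. The inequality \verb|JJENN_upper|, proved inside Lemma \ref{integrable_lemma}, then supplies
\[ \mathcal{J}_n \leq \mathrm{ENNReal.ofReal}\!\left(2\left(\tfrac{1}{24}\right)^n \zeta(3)\right). \]
Chaining these two gives an inequality in \texttt{ENNReal}; applying \verb|ENNReal.toReal| --- which is monotone on the finite values involved here --- together with the \verb|toReal_ofReal| cancellation pulls the inequality down to $\mathbb{R}$, yielding $\mathfrak{J}_n \leq 2(1/24)^n \zeta(3)$, which is the claim once one recognises $\sum_{n\in\mathbb{N}} 1/(n+1)^3$ as $\zeta(3)$.

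The main obstacle will be purely bookkeeping around the real-versus-\texttt{ENNReal} coercions: one must avoid applying \verb|toReal| to $\infty$ (it silently returns $0$), so the finiteness of both sides must be established explicitly before extracting the real inequality, and the almost-everywhere non-negativity hypothesis of \cref{lst:ofReal_integral_eq_lintegral_ofReal} should be discharged via \cref{ae_nonneg_restrict_of_forall_setIntegral_nonneg_inter} to match the Mathlib statement exactly rather than being argued pointwise. All the genuinely analytic content --- the $1/24$ bound on the kernel, the integrability over the unit cube, and the evaluation of $\int_{(0,1)^3} 1/(1-(1-xy)z)\,d\mu = 2\zeta(3)$ from equation (\ref{triple_eq_zeta3}) --- has already been handled, so no further estimates are required here.
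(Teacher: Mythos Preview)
Your proposal is correct and follows the same route as the paper: the paper's one-line justification before Theorem \ref{thm:bound} is precisely the application of \cref{lst:ofReal_integral_eq_lintegral_ofReal} you describe, transferring the already-proved \texttt{ENNReal} bound \texttt{JJENN\_upper} down to the real-valued $\mathfrak{J}_n$ via $\mathfrak{J}'_n$. Your write-up is in fact considerably more explicit than the paper's own account; the only superfluous step is routing the a.e.\ non-negativity through \cref{ae_nonneg_restrict_of_forall_setIntegral_nonneg_inter}, since pointwise non-negativity on $(0,1)^3$ already gives a.e.\ non-negativity directly without needing integrability as a hypothesis.
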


 The formal statement in Lean 4 is: 
\begin{lstlisting}[frame = single]
Theorem JJ_upper (n : ℕ) :
    JJ n ≤ 2 * (1 / 24) ^ n * ∑' n : ℕ, 1 / ((n : ℝ) + 1) ^ 3
\end{lstlisting}

Next, we demonstrate that the sequence converges to zero. 
\begin{theorem}\label{thm5}

The sequence $\{a_n + b_n \zeta(3)\}$ tends to $0$ when $n \rightarrow \infty$.
\end{theorem}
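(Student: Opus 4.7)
The plan is to combine three ingredients already established earlier in the excerpt. From the subsection on the integer sequence we have the identity
\[
a_n + b_n \zeta(3) = d_n^3 \cdot \mathfrak{J}_n;
\]
from Theorem \ref{thm:bound} we have the upper bound $\mathfrak{J}_n \leq 2(1/24)^n \zeta(3)$; and from inequality (\ref{upperbounddn}) we have $d_n^3 \leq 21^n$ for sufficiently large $n$.

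First I would invoke Theorem \ref{thm:pos}, which gives $\mathfrak{J}_n > 0$, so $a_n + b_n \zeta(3) = d_n^3 \mathfrak{J}_n > 0$ and absolute values are unnecessary. Combining this with Theorem \ref{thm:bound} yields
\[
0 < a_n + b_n \zeta(3) \leq 2 \zeta(3) \left(\frac{1}{24}\right)^n \cdot d_n^3.
\]
Applying inequality (\ref{upperbounddn}) for sufficiently large $n$, the right-hand side is bounded by
\[
0 < a_n + b_n \zeta(3) \leq 2 \zeta(3) \left(\frac{21}{24}\right)^n.
\]
Since $21/24 < 1$, the geometric sequence $(21/24)^n$ tends to $0$ as $n \to \infty$, so the squeeze theorem delivers $a_n + b_n \zeta(3) \to 0$.

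In Lean 4 I would phrase the conclusion using \lstinline{Filter.Tendsto} with \lstinline{Filter.atTop} on $\mathbb{N}$. The squeeze step corresponds to Mathlib's \lstinline{tendsto_of_tendsto_of_tendsto_of_le_of_le} applied with the constant zero on the left and $n \mapsto 2\zeta(3)(21/24)^n$ on the right, and the convergence $(21/24)^n \to 0$ follows from \lstinline{tendsto_pow_atTop_nhds_zero_of_lt_one} together with \lstinline{Tendsto.const_mul}. The main obstacle I anticipate is bookkeeping around the ``sufficiently large $n$'' quantifier coming from the prime number theorem bound: since $d_n^3 \leq 21^n$ holds only eventually, every inequality in the chain must be lifted to \lstinline{Filter.Eventually} form before being fed into the squeeze lemma, and care is needed to keep the \lstinline{ENNReal}, $\mathbb{N}$, $\mathbb{Z}$, $\mathbb{R}$ coercions on $a_n$, $b_n$, $d_n$ and $\mathfrak{J}_n$ coherent throughout.
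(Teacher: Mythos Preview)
Your proposal is correct and follows essentially the same route as the paper: multiply the bound $\mathfrak{J}_n \le 2(1/24)^n\zeta(3)$ by $d_n^3$, invoke the eventual estimate $d_n^3 \le 21^n$ from~(\ref{upperbounddn}), and squeeze against the geometric sequence $(21/24)^n$. Your explicit appeal to Theorem~\ref{thm:pos} for positivity and your remarks about handling the ``eventually'' quantifier and coercions in Lean are accurate additions that the paper leaves implicit.
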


 The formal statement in Lean 4 is:
\begin{lstlisting}[frame = single]
theorem fun1_tendsto_zero : Filter.Tendsto (fun n ↦ ENNReal.ofReal (fun1 n)) Filter.atTop (nhds 0)
\end{lstlisting}

\begin{proof}
According to Theorem \ref{thm:bound}, we have
\[
\mathfrak{J}_n\cdot d_n^3 \leq 2\left( \frac{1}{24}\right)^n d_n^3 \zeta(3).
\]
Since $2\zeta(3)$ is constant, we can analyze the asymptotic behavior of $d_n^3$ for sufficiently large $n$. Using Theorem \ref{pi_alt} and equation (\ref{upperbounddn}),  for sufficiently large $n$, we have  $d_n^3 \leq 21^n$.

Therefore, we can conclude that for sufficiently large $n$, the following holds:
\[
\mathfrak{J}_n\cdot d_n^3 \leq \left( \frac{21}{24}\right)^n 2 \zeta(3).
\]
When $n \rightarrow \infty$, $\left(\frac{21}{24}\right)^n \rightarrow 0$. Hence, the sequence $\mathfrak{J}_n\cdot d_n^3$ tends to $0$, which implies that  the sequence $\{a_n + b_n \zeta(3)\}$ converges to $0$ as $n \rightarrow \infty$. \hfill $\qedsymbol$
\end{proof}

\subsection{Irrationality of $\zeta(3)$}

Finally, we establish the irrationality of  $\zeta(3)$:

\begin{theorem}
    $\zeta(3)$ is irrational.
\end{theorem}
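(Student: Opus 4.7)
The plan is to derive a contradiction by combining the positivity of $\mathfrak{J}_n$ (Theorem \ref{thm:pos}) with the vanishing of the integer linear combinations $\mathfrak{J}_n \cdot d_n^3 = a_n + b_n \zeta(3)$, $a_n, b_n \in \mathbb{Z}$ (Theorem \ref{thm5}). Since a nonzero integer linear combination $A + B x$ cannot be arbitrarily small when $x$ is rational — its absolute value is bounded below by the reciprocal of any positive denominator of $x$ — such a sequence can tend to zero only if $\zeta(3)$ is irrational.

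Concretely, I would assume for contradiction that $\zeta(3) = p/q$ with $p \in \mathbb{Z}$, $q \in \mathbb{N}$, and $q > 0$. Multiplying the identity $\mathfrak{J}_n \cdot d_n^3 = a_n + b_n \zeta(3)$ through by $q$ yields
\[q \cdot \mathfrak{J}_n \cdot d_n^3 = q \cdot a_n + p \cdot b_n \in \mathbb{Z}.\]
By Theorem \ref{thm:pos}, $\mathfrak{J}_n > 0$, and since $d_n \geq 1$ and $q > 0$, the left-hand side is a \emph{positive} integer and therefore $\geq 1$. Dividing by $q$ produces the uniform lower bound $\mathfrak{J}_n \cdot d_n^3 \geq 1/q$ for every $n \in \mathbb{N}$. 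On the other hand, Theorem \ref{thm5} asserts that $\mathfrak{J}_n \cdot d_n^3 \to 0$ as $n \to \infty$, so for $n$ sufficiently large we must have $\mathfrak{J}_n \cdot d_n^3 < 1/q$, a contradiction. Hence $\zeta(3)$ is irrational.

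The main obstacle in the Lean formalization is not the mathematical content — the argument above is only a handful of lines — but the bookkeeping of type coercions among $\mathbb{N}$, $\mathbb{Z}$, $\mathbb{R}$ and \texttt{ENNReal}. Specifically, one must unfold the negation of \texttt{Irrational} to extract an explicit integer $p$ and positive natural number $q$ with $\zeta(3) = p/q$, relate the real expression $\mathfrak{J}_n d_n^3$ to the integer $q a_n + p b_n$, and convert the convergence statement of Theorem \ref{thm5} (phrased via \texttt{ENNReal.ofReal}) back to the real numbers using the positivity supplied by Theorem \ref{thm:pos}. Once these coercions are handled, the ``positive integer $\geq 1$'' step together with the definition of convergence closes the proof immediately.
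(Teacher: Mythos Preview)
Your proposal is correct and follows essentially the same argument as the paper: assume $\zeta(3)=p/q$, use the integer form $\mathfrak{J}_n d_n^3=a_n+b_n\zeta(3)$ together with Theorem~\ref{thm:pos} to conclude that $q a_n+p b_n$ is a positive integer (hence $\ge 1$), and then contradict the convergence to zero from Theorem~\ref{thm5}. Your additional remarks on the coercion bookkeeping in Lean are apt and match the practical issues one encounters when formalizing this final step.
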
   

The formal statement in Lean 4 is: 
\begin{lstlisting}[frame = single]
theorem zeta_3_irratoinal : ¬ ∃ r : ℚ, r = riemannZeta 3
\end{lstlisting}

\begin{proof} Assume, for the sake of contradiction, that $\zeta(3) = \frac{p}{q}$, where  $\gcd(p,q)=1$ and $p,q>0$. Then by Theorem \ref{thm5}, we have $qa_n + pb_n \rightarrow 0$ as $n \rightarrow \infty$, since $q$ is a constant. 
Theorem \ref{thm:pos} states that $a_n + b_n\zeta(3)>0$ and $q>0$, which implies  that  $qa_n + pb_n > 0$.  Furthermore, since $a_n, b_n$ are integers,  $qa_n + pb_n \in \mathbb{Z}$. Therefore, we have $qa_n + pb_n \geqslant 1$ for all $n \in \mathbb{N}$. This leads to a contradiction, thereby implying that   $\zeta(3)$ is irrational. \hfill $\qedsymbol$
\end{proof}
\end{subsection}

\section{Conclusion}

Our work delivers the first complete formalization of the irrationality of \(\zeta(3)\) in Lean~4. To transform Beukers’ informal proof into a fully formal proof in Lean, we carefully adjusted and refined his arguments to meet the strict requirements of formalization. This process allowed us to bridge the gaps in the original proof and produce a complete, machine-verified demonstration within Lean’s framework.
We formally define the shifted Legendre polynomials and prove their fundamental properties. Additionally, we provide  the first formal proof in Lean~4 of a version of the Prime Number Theorem with an error term which is stronger than what had previously been formalized. This achievement significantly advances Lean’s analytical capabilities in number theory.







\acknowledgements{\rm Junqi Liu and  Lihong  Zhi are supported by the National Key R$\&$D Program of China 2023YFA1009401 and the Strategic Priority Research Program of Chinese Academy of Sciences under Grant XDA0480501. The author would like to thank Kevin Buzzard and Shaoshi Chen for helpful discussions and suggestions.}


\end{document}